\newtheorem{theorem}{Theorem}[section]
\newtheorem{lemma}[theorem]{Lemma}
\newtheorem{prop}[theorem]{Proposition}
\newtheorem{coro}[theorem]{Corollary}
\theoremstyle{definition}
\newtheorem{rem}[theorem]{Remark}
\newtheorem{rems}[theorem]{Remarks}
\newtheorem{exas}[theorem]{Examples}
\newcommand\HH{\mathcal H}
\newcommand\tr{\mathrm{tr}}
\newcommand\Id{\mathrm{Id}}
\newcommand\ZZ{\mathbb{Z}}
\newcommand\CC{\mathbb{C}}
\newcommand\FF{\mathbb{F}}
\newcommand\Pg{\overline{P}}
\newcommand\TT{\overline{T}}
\newcommand\IS{\mathrm{IS}}
\numberwithin{equation}{section}
\title[Intertwined integer sequences]
{Pairs of intertwined integer sequences}
\author{Christian Kassel}
\address{Christian Kassel: 
Universit\'e de Strasbourg \& CNRS, 
Institut de Recherche Math\'ematique Avanc\'ee (IRMA),
7~rue Ren\'e-Descartes,
67084 Strasbourg, France}
\email{kassel@math.unistra.fr}
\urladdr{irma.math.unistra.fr/\raise-2pt\hbox{\~{}}kassel/}
\author{Christophe Reutenauer}
\address{Christophe Reutenauer:
Math\'ematiques, Universit\'e du Qu\'ebec \`a Montr\'eal,
Montr\'eal, CP~8888, succ.\ Centre Ville, Canada H3C 3P8}
\email{reutenauer.christophe@uqam.ca}
\urladdr{reutenauer.math.uqam.ca}
\keywords{integer sequences, generating functions, infinite product, polynomials, Chebyshev polynomials, 
approximation, multiplicative functions}
\subjclass[2010]{(Primary)
11T55, 
14N10, 
(Secondary)
05A30 
}
\begin{document}

\begin{abstract}
In previous work we computed the number~$C_n(q)$ of ideals of codimension~$n$ of 
the algebra $\FF_q[x,y,x^{-1}, y^{-1}]$ of two-variable Laurent polynomials over a finite field: it turned out that 
$C_n(q)$ is a palindromic polynomial of degree~$2n$ in~$q$, divisible by~$(q-1)^2$. The quotient $P_n(q) = C_n(q)/ (q-1)^2$
is a palindromic polynomial of degree~$2n-2$. For each $n\geq 1$ there is a unique degree~$n-1$ polynomial~$\Pg_n(X) \in \ZZ[X]$
such that $\Pg_n(q+q^{-1}) = P_n(q)/q^{n-1}$. 
In this note we show that for any integer~$N$ the integer value $\Pg_n(N)$ is close to the value at~$N$ of the 
degree~$n-1$ polynomial $F_{n-1}(X) = 1 + \sum_{k=1}^{n-1} \, \TT_k(X)$,
which is a sum of monic versions $\TT_k(X)$ of Chebyshev polynomials of the first kind.
We give a precise formula for~$\Pg_n(X)$ as a linear combination of $F_k(X)$'s, each appearance of the latter being
parametrized by an odd divisor of~$n$. As a consequence, $\Pg_n(X) = F_{n-1}(X)$ if and only if $n$ is a power of~$2$.
We exhibit similar formulas for~$C_n(q)$.
\end{abstract}

\maketitle

\section{Introduction}\label{sec-intro}

In this note we produce for each integer~$N$ a pair $(\Pg_n(N), (F_{n-1}(N))_{n\geq 1}$ of integer sequences.
Both sequences, though of differing origin,
consist of amazingly close elements. Some of them do even coincide: we have $\Pg_n(N) = F_{n-1}(N)$ whenever $n$ is a power of~$2$.
The reader is invited to examine Table~\ref{table-values} (manufactured using SageMath) which exhibits such pairs for $N = 3,4,5$; 
in this table coinciding values have been set in boldface and values differing by~$1$ in italics.

\begin{table}[ht]
\caption{\emph{Values of $\Pg_n(N)$ and $F_{n-1}(N)$ for $N = 3,4,5$}}\label{table-values}
\renewcommand\arraystretch{1.30}
\noindent\[\hskip -14pt
\begin{array}{|c||c|c||c|c||c|c||}
\hline
n &  \Pg_n(3) & F_{n-1}(3) &  \Pg_n(4) & F_{n-1}(4) & \Pg_n(5) & F_{n-1}(5) \\
\hline\hline
{1}  & \textbf{1} &  \textbf{1} &\textbf{1}  & \textbf{1}  & \textbf{1} & \textbf{1}\\ 
\hline
{2} &  \textbf{4} & \textbf{4} & \textbf{5} & \textbf{5} & \textbf{6}  & \textbf{6} \\
\hline
3 &  \textit{10} & \textit{11} &  \textit{18} & \textit{19} & \textit{28} & \textit{29} \\ 
\hline
{4} &  \textbf{29}  & \textbf{29}  & \textbf{71} & \textbf{71} & \textbf{139} & \textbf{139}  \\ 
\hline
5 &   72 & 76 & 260 & 265 & 660 & 666\\ 
\hline
6 &    \textit{200} &  \textit{199} & \textit{990} & \textit{989} & \textit{3192} & \textit{3191}  \\
\hline
7 &  510  & 521  &  3672 & 3691 & 15260 & 15289 \\ 
\hline
{8} &  \textbf{1364} & \textbf{1364} & \textbf{13775} & \textbf{13775} & \textbf{73254} & \textbf{73254} \\ 
\hline
9 &  3546  & 3571  & 51343 & 51409 & 350848 &  350981 \\
\hline
10 &  \textit{9348}  &  \textit{9349}  & \textit{191860} & \textit{191861} & \textit{1681650} & \textit{1681651}  \\
\hline
11 &  24400  & 24476  & 715770 & 716035 & 8056608 & 8057274 \\
\hline
12 &  64090 & 64079 & 2672298 & 2672279 & 38604748 & 38604719 \\
\hline
13 & 167562  & 167761 & 9972092 & 9973081 & 184963130 & 184966321 \\
\hline
14 & 439200  & 439204 & 37220040 & 37220045 &886226880 & 886226886 \\
\hline
15 &  1149360 &1149851  & 138903480 & 138907099 &4246152960 & 4246168109 \\
\hline
{16} & \textbf{3010349} & \textbf{3010349} & \textbf{518408351} & \textbf{518408351} & \textbf{20344613659} & \textbf{20344613659}\\
\hline
\end{array}
\]
\end{table}

We now describe the components of these pairs. Starting with~$\Pg_n(N)$,
let~$\FF_q$ be a finite field of cardinality~$q$ and $\FF_q[x,y,x^{-1}, y^{-1}]$ be the algebra of
two-variable Laurent polynomials with coefficients in~$\FF_q$.
For any $n\geq 1$ define $C_n(q)$ to be the number of ideals of codimension~$n$ of~$\FF_q[x,y,x^{-1}, y^{-1}]$; 
such ideals are in one-to-one correspondence with the $\FF_q$-points of the Hilbert scheme 
of $n$~points on the two-dimensional torus.
In~\cite{KR2} we showed that $C_n(q)$ is a monic polynomial of degree~$2n$ with integer coefficients in the variable~$q$.
Moreover, $C_n(q)$ is palindromic and divisible by~$(q-1)^2$. 
We gave explicit formulas for the coefficients of~$C_n(q)$ in~\cite[Th.~1.1]{KR3} (see also \S~\ref{ssec-Cnq} below).

Since $C_n(q)$ is divisible by~$(q-1)^2$ we can define the degree $2n-2$ polynomial~$P_n(q)$ by $C_n(q) = (q-1)^2 P_n(q)$: 
it is monic, palindromic and has integer coefficients. 
One of the most striking features of~$P_n(q)$ is that its coefficients are all \emph{nonnegative}.
Actually we showed that each coefficient of~$P_n(q)$ can be expressed
as the number of divisors of~$n$ in a certain interval (see~\cite[Thm.~1.3]{KR3} and \S~\ref{ssec-defPn} below).

In \cite{KR1, KR3} we also computed the values taken by $P_n(q)$ when $q$ is a complex root of unity of order $1,2,3,4$ or~$6$, 
equivalently when $q + q^{-1}$ takes one of the respective values $2, -2, -1, 0, 1$ (see \cite[Thm.~1.6]{KR3}),
thus recovering well-known arithmetical functions. For instance, $P_n(1) = \sigma(n)$ is equal to the sum of divisors of~$n$
and $P_n(-1) = r(n)/4$, where $r(n)$ is the number of representations of~$n$ as a sum of squares of two integers.

Actually the Laurent polynomial ${P_n(q)}/{q^{n-1}}$ takes integer values whenever $q + q^{-1} = N$ is an integer.
Computing such integer values is our main objective in this note. 
For simplicity we replace the Laurent polynomial $P_n(q)/q^{n-1}$ in the variable~$q$ 
by the degree $n-1$ polynomial~$\Pg_n(X)$ 
in the variable~$X$, defined by
\begin{equation}\label{Pg-P}
\Pg_n(q + q^{-1}) = P_n(q)/q^{n-1} .
\end{equation}
(This is possible since $P_n(q)$ is palindromic of degree~$2n-2$.)
We have thus reduced our problem to the computation of the values of~$\Pg_n(X)$ at integers~$N$.

Plugging the integers $\Pg_1(N), \Pg_2(N),\ldots, \Pg_8(N)$ for $N=5$ into the 
\emph{On-Line Encyclopedia of Integer Sequences}~\cite{OEIS}
yielded  (as of Jan.~7, 2025)  
an ``approximate match'' by the first coefficients of the power series expansion of the rational function
\[
\frac{1+t}{1 - 5t + t^2}.
\]
Replacing $5$ by~$3$ and by~$4$ in this rational function produced similar approximate matches 
for $\Pg_n(3)$ and $\Pg_n(4)$ respectively.
It was then natural to replace $3,4,5$ by an indeterminate~$X$, which we do next.

The second integer sequence mentioned above consists of integer values at~$N$ of the polynomials~$F_k(X)$ 
defined by
\begin{equation}\label{gf-F}
\sum_{k\geq 0} \, F_k(X) \, t^k 
= \frac{1+t}{1 - Xt + t^2} \, .
\end{equation}
It turns out that each polynomial $F_k(X)$ is of degree~$k-1$, has integer coefficients and we have
\begin{equation}\label{eq-FFTT}
F_k(X) = 1 + \sum_{m=1}^{k} \, \TT_m(X) ,
\end{equation}
which is the sum of monic versions~$\TT_m(X)$ of the Chebyshev polynomials of the first kind
defined in \S~\ref{ssec-Tchb} below.

With this notation we establish that each~$\Pg_n(X)$ can be approximated by~$F_{n-1}(X)$ in the sense that 
the difference between these two monic  polynomials, both of degree~$n-1$,
is a polynomial of much lower degree (see Theorem~\ref{th-approx} for a precise statement). 

More interestingly, $\Pg_n(X)$ can be expressed as the following linear combination of~$F_k(X)$,
whose terms are parameterized by the odd divisors $d$ of~$n$:
\begin{equation}\label{formula-PFodd}
\Pg_n(X) = \sum_{d | n, \, d \, \mathrm{odd} \atop r_n(d) \geq 0} \, F_{r_n(d)}(X) 
- \sum_{d | n, \, d \, \mathrm{odd} \atop r_n(d) <0} \, F_{-r_n(d) -1}(X) ,
\end{equation}
where $r_n(d) = {n/d} - {(d+1)/2}$; see Theorem~\ref{th-PFodd}.
For $d=1$ we have $r_n(1) = n-1$, which implies the presence of~$F_{n-1}(X)$ in~\eqref{formula-PFodd} for all $n\geq 1$.

Since the powers of~$2$ are the only integers with a unique odd divisor, namely $d=1$, 
we have $\Pg_n(X) = F_{n-1}(X)$ if and only if $n$ is a power of~$2$.
This explains the coincidences observed in Table~~\ref{table-values}.

We deduce Formula~\eqref{formula-PFodd} from the following new expression for~$C_n(q)$:
\begin{equation}\label{formula-Cnodd}
C_n(q) = q^n \, \sum_{d | n , \, d \, \mathrm{odd}} \, 
\left( q^{\frac{n}{d} - \frac{d-1}{2}} +  q^{-\frac{n}{d} + \frac{d-1}{2}}  
- q^{\frac{n}{d} - \frac{d+1}{2}}  - q^{-\frac{n}{d} + \frac{d+1}{2}} \right) .
\end{equation}
See Theorem~\ref{th-Cn}.

Formula~\eqref{formula-Cnodd} allows us to express the local zeta function and the Hasse--Weil zeta function
of the Hilbert scheme of $n$~points on the two-dimensional torus
as products parameterized by the odd divisors of~$n$ (see \S~\ref{ssec-zeta}).

Let us detail the content of each section.
In Section~\ref{sec-Pn}, for each $n\geq 1$ we define the degree~$n-1$ polynomial~$\Pg_n(X)$ by~\eqref{Pg-P}.
As mentioned above, the integers of interest to us are the values~$\Pg_n(N)$ at arbitrary integers~$N$.
Building on \cite{KR3}, we show that the function $|\Pg_n(N)|$ is multiplicative if and only if $N= -2, -1, 0$ or~$2$.

Section~\ref{sec-aux} is devoted to the monic versions $\TT_k(X)$ of the Chebyshev polynomials of the first kind
and to the polynomials~$F_k(X)$ defined above.

In Section~\ref{sec-resultsP} we state the results on~$\Pg_n(X)$ mentioned above.
Theorem~\ref{th-approx} and Theorem~\ref{th-PFodd} with Formula~\eqref{formula-PFodd} are the main results.
As special cases of~\eqref{formula-PFodd} we give formulas for~$P_n(X)$ in terms of the polynomials~$F_k(X)$ 
for various families of integers~$n$.

Section~\ref{sec-Cnq} is devoted to the polynomials~$C_n(q)$:
we prove~\eqref{formula-Cnodd} and state the formulas for the zeta functions mentioned above.
In Section~\ref{pf-th-PFodd} we use the results of Section~\ref{sec-Cnq} to prove Theorem~\ref{th-PFodd}.

\section{The polynomials $\Pg_n(X)$}\label{sec-Pn}

\subsection{Definition and basic properties}\label{ssec-defPn}

For each $n \geq 1$ we define the degree $n-1$ polynomial $\Pg_n(X)$ by
\begin{equation}\label{def-Pgn}
\Pg_n(q + q^{-1}) = P_n(q)/q^{n-1} ,
\end{equation}
where $P_n(q)$ was introduced in Section~\ref{sec-intro}. 
This makes sense since $P_n(q)$ is a palindromic polynomial of degree~$2n-2$. 
Since $P_n(q)$ has integer coefficients, so has~$\Pg_n(X)$.

As a consequence of~\cite[Cor.~1.4]{KR2}, we have the equality of formal power series
\begin{equation*}\label{gf-Pnq}
1 + (q + q^{-1} - 2) \sum_{n\geq 1} \, \frac{P_n(q)}{q^{n-1}}  \, t^n
= \prod_{i\geq 1}\, \frac{(1-t^i)^2}{1-(q+q^{-1})t^i + t^{2i}} \, .
\end{equation*}
This translates into the following equality for the polynomials~$\Pg_n (X)$:
\begin{equation}\label{gf-Pgn}
1 + (X - 2) \sum_{n\geq 1} \, \Pg_n (X)  \, t^n
= \prod_{i\geq 1}\, \frac{(1-t^i)^2}{1- X t^i + t^{2i}} \, .
\end{equation}

By \cite[Thm.~1.3]{KR3} each polynomial~$P_n(q)$ is of the form
\begin{equation*}\label{Pn-coeff}
P_n(q) = a_{n,0}\, q^{n-1}  + \sum_{i=1}^{n-1} \, a_{n,i} \, \left( q^{n+i-1} + q^{n-i-1} \right) , 
\end{equation*}
where the coefficient~$a_{n,i}$ is equal to the number of divisors~$d$ of~$n$ satisfying the inequalities
\begin{equation}\label{Pn-coeff}
\frac{i+ \sqrt{2n+i^2}}{{2}} < d \leq i+ \sqrt{2n+i^2} . \qquad (0 \leq i \leq n-1)
\end{equation}
It follows that for each positive integer~$n$,
\begin{equation}\label{PPTT}
\Pg_n(X) = a_{n,0}  + \sum_{i=1}^{n-1} \, a_{n,i} \, \TT_i(X) ,
\end{equation}
where $\TT_i(X)$ is the monic Chebyshev polynomial of degree~$i$ defined by~\eqref{defTT} below.
Since the coefficients~$a_{n,i}$ are nonnegative, each~$\Pg_n(X)$ is a \emph{sum} of monic Chebyshev polynomials~$\TT_k(X)$.

A list of polynomials $\Pg_n(X)$ for $1 \leq n \leq 12$ is given in Table~\ref{tablePP}.
It is based on the corresponding list of $P_n(q)$ in Table~2 of either~\cite{KR2} or~\cite{KR3}.

\begin{table}[ht]
\caption{\emph{The polynomials $\Pg_n (X)$}}\label{tablePP}
\renewcommand\arraystretch{1.25}
\noindent\[
\begin{array}{|c||c|}
\hline
n & \Pg_n(X) \\
\hline\hline
1 & 1   \\ 
\hline
2 & X + 1  \\
\hline
3 & X^2 + X - 2  \\ 
\hline
4 & X^3 + X^2 - 2X -1  \\ 
\hline
5 & X^4 + X^3 - 3 X^2 - 3X  \\ 
\hline
6 &  X^5 + X^4 - 4X^3 - 3 X^2 + 3X + 2   \\
\hline
7 & X^6 + X^5 - 5 X^4 -4 X^3 + 5X^2 + 2X  \\ 
\hline
8 &  X^7 +X^6 - 6X^5 - 5 X^4 + 10 X^3 + 6 X^2 - 4X -1  \\ 
\hline
9 &  X^8 + X^7 - 7 X^6 - 6 X^5 +15 X^4 + 9 X^3 - 11 X^2 - X +3  \\
\hline
10 & X^9 + X^8 -8 X^7 -7 X^6 + 21 X^5  + 15 X^4 - 20 X^3 - 10 X^2 + 5X   \\
\hline
&  X^{10} + X^9 - 9 X^8 - 8 X^7 + 28 X^6 + 21 X^5  \\
11 &   - 36 X^4 - 21 X^3 + 18 X^2 + 7X - 2  \\
\hline
&   X^{11} + X^{10} - 10 X^9 - 9 X^8 + 36 X^7 + 28 X^6  \\
12 & - 56 X^5 -35 X^4 + 35 X^3 + 16 X^2 - 5 X  -2  \\
\hline
\end{array}
\]
\end{table}

The following result explains why all entries in Table~\ref{table-values} are positive.

\begin{prop}
For each $n \geq 1$, we have $\Pg_n(N) > 0$ if $N$ is an integer $\geq 2$.
\end{prop}

\begin{proof}
It is a consequence of Proposition~\ref{TTpos} below and 
of the fact mentioned above that each $\Pg_n(X)$ is a sum of~$\TT_k(X)$'s.
\end{proof}

\subsection{Multiplicative values of $|P_n(q)|$ at roots of unity}\label{ssec-mult}

The content of this subsection is an addendum to \cite[Th.\,1.6]{KR3}. It concerns the cases when $q + q^{-1} = N$
is an integer such that $-2 \leq N \leq 2$, 
equivalently when $q$ is a root of unity of order $d = 1, 2, 3, 4$ or~$6$. 

\begin{prop}\label{prop-mult}
Let $q$ be a complex root of unity of order $d = 1, 2, 3$ or $4$. 
Then the sequence of nonnegative integers 
\begin{equation*}
\left |P_n(q) \right |  = \left | P_n(q)/q^{n-1} \right | =  \left |\Pg_n(q + q^{-1}) \right |
\end{equation*}
is multiplicative as a function of~$n$.
\end{prop}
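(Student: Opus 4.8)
The plan is to reduce the statement to known facts about the arithmetical functions $|P_n(q)|$ at the relevant roots of unity, exploiting the identification $|P_n(q)| = |\Pg_n(q+q^{-1})|$ already recorded in the excerpt. By \cite[Thm.~1.6]{KR3} we have explicit identifications of $P_n(q)$ for $q$ of order $1,2,3,4$: namely $P_n(1) = \sigma(n)$, $P_n(-1) = r(n)/4$ (with $q$ of order $2$), and $P_n(q)$ for $q$ a primitive third (resp.\ fourth) root of unity is, up to sign and up to the harmless factor $q^{n-1}$, one of the standard divisor-type or sum-of-two-squares-type functions. So first I would invoke that theorem to pin down, in each of the four cases, exactly which classical arithmetic function the sequence $\bigl(|P_n(q)|\bigr)_{n\ge 1}$ equals.

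The heart of the argument is then the observation that each of these classical functions is multiplicative. For $d=1$ this is the multiplicativity of $\sigma(n) = \sum_{e\mid n} e$, which is standard. For $d=2$ it is the multiplicativity of $n \mapsto r_2(n)/4$; here $r_2$ itself is not multiplicative (e.g.\ $r_2(1)=4$), but $r_2(n)/4 = \sum_{d\mid n}\chi_{-4}(d)$ is the divisor sum of the nontrivial Dirichlet character modulo $4$, hence multiplicative, and equals $1$ at $n=1$. For $d=3$ and $d=4$ the relevant functions are likewise of the form $\sum_{e\mid n}\psi(e)$ for a suitable completely multiplicative $\psi$ (a Dirichlet character or the function $e\mapsto e$ twisted by such a character), and a divisor sum of a multiplicative function is multiplicative. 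So the key steps, in order, are: (i) cite \cite[Thm.~1.6]{KR3} to identify the four sequences; (ii) rewrite each as $\sum_{e\mid n}\psi(e)$ with $\psi$ completely multiplicative and $\psi(1)=1$; (iii) conclude multiplicativity from the classical fact that $g(n)=\sum_{e\mid n} f(e)$ is multiplicative whenever $f$ is, together with the remark that $g(1)=f(1)=1\neq 0$ so $g$ is multiplicative in the strict (non-identically-zero) sense.

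One minor subtlety to address explicitly is the passage from $P_n(q)$ to $|P_n(q)|$: for $q$ of order $3$ or $4$ the values $P_n(q)$ may carry a sign or a factor $q^{n-1}$, and what is asserted multiplicative is the sequence of absolute values $|P_n(q)| = |\Pg_n(q+q^{-1})|$. Since $|q^{n-1}|=1$ and since taking absolute values turns a signed multiplicative-up-to-a-character function into an honestly non-negative one that still satisfies $h(mn)=h(m)h(n)$ for coprime $m,n$, this causes no trouble; I would just note that $|\cdot|$ is multiplicative on $\CC$ and that the character values involved are roots of unity of absolute value $1$, so the coprime-multiplicativity survives. The main (and essentially only) obstacle is bookkeeping: making sure that in each of the four cases the function extracted from \cite[Thm.~1.6]{KR3} is written in the divisor-sum form $\sum_{e\mid n}\psi(e)$ with the correct $\psi$, and handling the $d=2$ case where one must divide by $4$ before multiplicativity holds. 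Everything else is a one-line appeal to the standard lemma on divisor sums of multiplicative functions.
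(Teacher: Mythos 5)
Your proposal follows essentially the same route as the paper: a case-by-case identification, via \cite[Thm.~1.6]{KR3}, of $|P_n(q)|$ with a classical multiplicative arithmetic function ($\sigma(n)$ for $q=1$, $r(n)/4$ for $q=-1$, and the sequences attached to the orders $3$ and $4$). The only difference is in how multiplicativity is then justified: you propose to exhibit each sequence as a twisted divisor sum $\sum_{e\mid n}\psi(e)$ with $\psi$ completely multiplicative, whereas the paper simply cites the known multiplicativity of each sequence (OEIS A000203, A002654, A113063 together with Fine's book, and A002325); your order-$3$ case is the one place where this extra step would require genuine verification rather than a citation, as you yourself acknowledge.
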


Recall that a function $f(n)$ defined on nonnegative integers is \emph{multiplicative}  if $f(mn) = f(m) f(n)$ 
whenever $m$ and $n$ are coprime.

\begin{proof}
(i) When $q = 1$, by \cite[Cor.\, 1.2]{KR2} we have $\Pg_n(2) = P_n(1) = \sigma(n)$, the sum of divisors of~$n$,
which is well-known to be multiplicative. See \cite[Sect.\,2.13]{Ap} and~\cite[A000203]{OEIS}.

(ii) For $q = -1$ we have $\Pg_n(-2) = P_n(-1) = r(n)/4$, where $r(n)$ is the number of representations of~$n$ 
as a sum of squares of two integers. By \cite[A002654]{OEIS} the sequence $r(n)/4$ is multiplicative. 
Then so is~$P_n(-1)$.

(iii) For $q = j$, a root of unity of order~$3$, we have
$\Pg_n(-1) = P_n(j)/j^{n-1} = \lambda(n)$, where $\lambda(n)$ is the sequence A113063 in~\cite{OEIS}.
By \cite[Sect.\,32, p.\,79]{Fi} $\lambda(n)$ is multiplicative. See also \cite{RC2}.

(iv) Let $i$ be a square root of~$-1$. It follows again from~\cite[Th.\,1.6]{KR3} that we have
\[
|\Pg_n(0)| = |P_n(i)/i^{n-1}| = r'(n)/2,
\] 
where $r'(n)$ is the number of representations of~$n$ as a sum of a
square and twice another square. The sequence $r'(n)/2$ is the sequence A002325 in~\cite{OEIS}.
It is known to be multiplicative.
\end{proof}

The case when $q = \omega$ is a root of unity of order~$6$ is slightly more involved. 

\begin{prop}\label{prop-mult1}
Let $\omega$ be a root of unity of order~$6$. 
Then the sequence of nonnegative integers $|P_n(\omega)| =  |\Pg_n(1)|$ is almost multiplicative 
in the sense that for all pairs $(m,n)$ of coprime positive integers,
\begin{equation*}
|P_m(\omega)| |P_n(\omega)| = |P_{mn}(\omega)| \quad \text{when} \;\; (m,n) \equiv (0,1), (1,1)  \;  \text{or} \; (1,2) \pmod{3},
\end{equation*}
\begin{equation*}
|P_m(\omega)| |P_n(\omega)| = 2 \, |P_{mn}(\omega)| \quad \text{when} \;\; (m,n) \equiv (0,2)  \pmod{3},
\end{equation*}
\begin{equation*}
|P_m(\omega)| |P_n(\omega)| = 4 \, |P_{mn}(\omega)| \quad \text{when} \;\; (m,n) \equiv (2,2) \pmod{3}.
\end{equation*}
\end{prop}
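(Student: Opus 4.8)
The plan is to deduce the three relations from the explicit value of~$P_n(\omega)$ recorded in~\cite[Th.~1.6]{KR3}, after isolating inside $|P_n(\omega)|$ a genuinely multiplicative factor. Since $\omega$ is a primitive sixth root of unity, $|\omega| = 1$ and $|\omega - 1| = 2\sin(\pi/6) = 1$; hence from $C_n(q) = (q-1)^2 P_n(q)$ together with Theorem~\ref{th-Cn} one gets
\begin{equation*}
|P_n(\omega)| = |C_n(\omega)| = \Bigl|\, \sum_{d \mid n,\ d\ \mathrm{odd}} h(a_d) \,\Bigr|, \qquad a_d = \frac{n}{d} - \frac{d-1}{2},
\end{equation*}
where $h \colon \ZZ/6\ZZ \to \ZZ$, $h(a) = (\omega^a + \omega^{-a}) - (\omega^{a-1}+\omega^{-(a-1)})$, has the six values $1, -1, -2, -1, 1, 2$. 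Equivalently one may use $|P_n(\omega)| = |\Pg_n(1)|$, Formula~\eqref{formula-PFodd}, and the fact that, by~\eqref{eq-FFTT} and $\TT_m(1) = \omega^m + \omega^{-m}$, the integer $F_k(1)$ depends only on $k \bmod 3$, equalling $\pm 1$ for $k \equiv 0, 2$ and $\pm 2$ for $k \equiv 1$.

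The key step is to write $|P_n(\omega)| = \epsilon(n)\, g(n)$, where $\epsilon(n) = 2$ if $n \equiv 2 \pmod 3$ and $\epsilon(n) = 1$ otherwise, and to check that $g(n) = |P_n(\omega)|/\epsilon(n)$ is multiplicative. The factor $\epsilon(n)$ is forced by the sixth-root-of-unity periodicity just recorded: the doubling of a summand $h(a_d)$, equivalently of a term $F_k(1)$, is governed by a residue condition modulo~$3$ which, summed over the odd divisors of~$n$, contributes exactly the single global factor $\epsilon(n)$. Granting this, multiplicativity of~$g$ says that after removing~$\epsilon$ the divisor sum factors over the prime-power components of~$n$, with trivial local factor at~$2$, local factors of the usual $\sum_{k\ge0}\chi(p)^k$-type at odd primes $p \ne 3$, and a separate factor at $p = 3$. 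This factorization is the technical heart of the argument; it is what the closed form of~\cite[Th.~1.6]{KR3} makes explicit, and if one takes that form as given the verification is immediate.

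It then remains to derive the three relations by elementary residue arithmetic. Let $m, n$ be coprime. If $3 \mid mn$, exactly one of $m, n$ is divisible by~$3$, so $mn \equiv 0 \pmod 3$ and $\epsilon(mn) = 1$; multiplicativity of~$g$ gives $|P_m(\omega)|\,|P_n(\omega)| = \epsilon(m)\epsilon(n)\,g(mn) = \epsilon(m)\epsilon(n)\,|P_{mn}(\omega)|$, which is $|P_{mn}(\omega)|$ when $(m,n) \equiv (0,1)$ and $2\,|P_{mn}(\omega)|$ when $(m,n) \equiv (0,2)$ modulo~$3$ (up to exchanging $m$ and~$n$). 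If $3 \nmid mn$, then $m$, $n$, $mn$ lie in $\{1, 2\}$ modulo~$3$: for $(m,n) \equiv (1,1)$ one has $mn \equiv 1$ and $\epsilon(m)\epsilon(n) = 1 = \epsilon(mn)$; for $(m,n) \equiv (1,2)$ one has $mn \equiv 2$ and $\epsilon(m)\epsilon(n) = 2 = \epsilon(mn)$; for $(m,n) \equiv (2,2)$ one has $mn \equiv 1$, $\epsilon(mn) = 1$ and $\epsilon(m)\epsilon(n) = 4$. In every case $|P_m(\omega)|\,|P_n(\omega)| = \bigl(\epsilon(m)\epsilon(n)/\epsilon(mn)\bigr)\,|P_{mn}(\omega)|$ is exactly the stated identity.

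The main obstacle is the middle step: showing that, once $\epsilon(n)$ is removed, the odd-divisor sum is multiplicative. The difficulty is that $a_d = n/d - (d-1)/2$ modulo~$6$ depends on the residue of~$n$ modulo~$24$ as well as on the factorization of the odd part of~$n$, so decoupling the prime-power contributions requires careful bookkeeping of these residues; the relation $h(a) = -h(a+3)$ is what ultimately lets the local factors split off. Alternatively, if the formula of~\cite[Th.~1.6]{KR3} is used as a black box, only the elementary residue argument of the previous paragraph is needed.
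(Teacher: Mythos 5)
Your proof is correct and, in the form you actually complete it --- taking the closed formula $|P_n(\omega)| = r(n)$, $r(n)/4$, $r(n)/2$ according to $n \equiv 0, 1, 2 \pmod 3$ from \cite[Th.~1.6\,(d)]{KR3} together with the multiplicativity of $r(n)/4$, and then running the mod-$3$ case check --- it is exactly the paper's proof, which states precisely these two ingredients and leaves the residue arithmetic implicit. The alternative ``direct'' route you sketch via the odd-divisor sum and the function $h$ is not carried out, but you correctly flag it as unnecessary once the closed form is taken as a black box.
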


\begin{proof}
It is a consequence of the multiplicativity of sequence $r(n)/4$ observed above and of the following reformulation
of~\cite[Th.\,1.1]{KR1} or~\cite[Th.\,1.6\,(d)]{KR3}:
\begin{equation*}
|P_n(\omega)| = 
\begin{cases}
\hskip 12pt  r(n)  & \text{if} \; n \equiv 0,  \\
\hskip 7pt   r(n)/4  & \text{if} \; n \equiv 1, \quad\pmod{3}\\
\hskip 7pt r(n)/2   & \text{if} \; n \equiv 2.
\end{cases}
\end{equation*}
\end{proof}

We have the following consequence of Propositions\,\ref{prop-mult} and\,\ref{prop-mult1}.

\begin{coro}
Given a pair $(m,n)$ of coprime positive integers,
the polynomial $\Pg_{mn}(X)^2 - \Pg_m(X)^2 \Pg_n(X)^2$ is divisible by $X(X+1)(X^2-4)$,
and moreover by $X-1$ if $(m,n) \equiv (0,1), (1,1)$ or $(1,2) \pmod{3}$.
\end{coro}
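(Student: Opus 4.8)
The plan is to reduce the claimed divisibility to the vanishing of the polynomial
$g(X) = \Pg_{mn}(X)^2 - \Pg_m(X)^2\Pg_n(X)^2 \in \ZZ[X]$ at a handful of integers, and then to quote Propositions~\ref{prop-mult} and~\ref{prop-mult1}. First I would record the elementary fact that for $a \in \ZZ$ the monic polynomial $X-a$ divides $g(X)$ in $\ZZ[X]$ if and only if $g(a)=0$, and that since $X$, $X+1$, $X-2$, $X+2$ and $X-1$ are pairwise coprime monic polynomials, their product divides $g(X)$ as soon as each factor does. So it suffices to check $g(0)=g(-1)=g(2)=g(-2)=0$ in general, and $g(1)=0$ in the three residue cases.

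Next I would use the dictionary between roots of unity and the values $N=q+q^{-1}$: the integers $2,-2,-1,0,1$ are exactly the values $q+q^{-1}$ for $q$ a complex root of unity of order $1,2,3,4,6$ respectively. For such $q$ one has $|q|=1$, so from the defining relation~\eqref{def-Pgn} we get $\Pg_n(q+q^{-1})^2 = |\Pg_n(q+q^{-1})|^2 = |P_n(q)/q^{n-1}|^2 = |P_n(q)|^2$. Hence for $a\in\{2,-2,-1,0\}$, with $q$ the associated root of unity, $g(a) = |P_{mn}(q)|^2 - \bigl(|P_m(q)|\,|P_n(q)|\bigr)^2$, which vanishes precisely when $n \mapsto |P_n(q)|$ is multiplicative; applied to the coprime pair $(m,n)$ this is exactly the content of Proposition~\ref{prop-mult}. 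This already shows that $X(X+1)(X-2)(X+2) = X(X+1)(X^2-4)$ divides $g(X)$.

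For the conditional refinement I would run the same computation at $a=1$, i.e.\ with $\omega$ of order~$6$: $g(1) = |P_{mn}(\omega)|^2 - \bigl(|P_m(\omega)|\,|P_n(\omega)|\bigr)^2$. Proposition~\ref{prop-mult1} says that $|P_m(\omega)|\,|P_n(\omega)| = |P_{mn}(\omega)|$ precisely when $(m,n) \equiv (0,1),(1,1)$ or $(1,2) \pmod 3$ (in the remaining cases $(0,2)$ and $(2,2)$ there is a stray factor $2$ or~$4$), so $g(1)=0$ in those three cases; being coprime to the previous four linear factors, $X-1$ then also divides $g(X)$, and $X(X+1)(X^2-4)(X-1)$ divides $g(X)$.

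I do not expect a genuine obstacle here: the argument is purely a matter of specializing $X$ to the five integers for which $|\Pg_n|$ is (almost) multiplicative. The only point requiring a little care is the bookkeeping of which root of unity corresponds to which integer $N=q+q^{-1}$, and, in the last step, making sure one invokes only the three residue classes of $(m,n)$ modulo~$3$ for which Proposition~\ref{prop-mult1} yields honest multiplicativity rather than multiplicativity up to a constant.
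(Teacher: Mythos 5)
Your proof is correct and follows essentially the same route the paper intends: the corollary is stated there as an immediate consequence of Propositions~\ref{prop-mult} and~\ref{prop-mult1}, obtained exactly as you do by evaluating $\Pg_{mn}(X)^2-\Pg_m(X)^2\Pg_n(X)^2$ at $X=q+q^{-1}\in\{2,-2,-1,0,1\}$ for $q$ a complex root of unity of order $1,2,3,4,6$ and invoking the (almost) multiplicativity of $|P_n(q)|$. The only quibble is the phrase ``vanishes precisely when'': you need only the implication from multiplicativity to vanishing, and this does not affect the argument.
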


We may wonder whether there are other integers $N$ such that $|\Pg_n(N)|$ is multiplicative. 
The answer is no, as witnessed by the following.

\begin{prop}
The function $|\Pg_n(N)|$ is multiplicative if and only if $N= -2, -1, 0$ or~$2$.
\end{prop}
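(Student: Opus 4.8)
The plan is to prove both implications separately. The ``if'' direction is already done: Proposition~\ref{prop-mult} handles $N = -2, -1, 0, 2$ (the cases $d = 1, 3, 4, 2$ respectively), since $|\Pg_n(N)| = |P_n(q)|$ is a known multiplicative arithmetic function for each of these values of~$q + q^{-1}$. So the real content is the ``only if'' direction: I must show that for every integer $N \notin \{-2, -1, 0, 2\}$, the function $n \mapsto |\Pg_n(N)|$ fails to be multiplicative. Note that $N = 1$ is already excluded by Proposition~\ref{prop-mult1}, which shows $|P_n(\omega)|$ is only \emph{almost} multiplicative, with genuine failures (e.g.\ the factor~$2$ or~$4$ discrepancies); one should record explicitly a coprime pair $(m,n)$ witnessing $|\Pg_{mn}(1)| \neq |\Pg_m(1)|\,|\Pg_n(1)|$, for instance using the formula $|P_n(\omega)| = r(n), r(n)/4, r(n)/2$ according to $n \bmod 3$: taking $m = 2$, $n = 5$ gives values that do not multiply correctly. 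That disposes of $N = 1$.

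For the remaining integers $N$ — that is, $N \geq 3$ or $N \leq -3$ — the cleanest strategy is to exhibit one fixed small coprime pair that works uniformly. The natural candidate is $(m, n) = (2, 3)$, so that $mn = 6$, and to use the explicit polynomials from Table~\ref{tablePP}: $\Pg_2(X) = X + 1$, $\Pg_3(X) = X^2 + X - 2$, and $\Pg_6(X) = X^5 + X^4 - 4X^3 - 3X^2 + 3X + 2$. Multiplicativity of $|\Pg_n(N)|$ at $N$ would force $|\Pg_6(N)| = |\Pg_2(N)|\cdot|\Pg_3(N)|$, i.e.\ $\Pg_6(N)^2 = \Pg_2(N)^2\,\Pg_3(N)^2$, i.e.\ $N$ is a root of the polynomial
\[
D(X) := \Pg_6(X)^2 - \Pg_2(X)^2\,\Pg_3(X)^2 .
\]
One computes $\Pg_2(X)^2\,\Pg_3(X)^2 = \bigl((X+1)(X^2+X-2)\bigr)^2 = (X^3 + 2X^2 - X - 2)^2$, and then $D(X) = \Pg_6(X)^2 - (X^3+2X^2-X-2)^2 = \bigl(\Pg_6(X) - X^3 - 2X^2 + X + 2\bigr)\bigl(\Pg_6(X) + X^3 + 2X^2 - X + 2\bigr)$; the first factor is $X^5 + X^4 - 5X^3 - 5X^2 + 4X + 4 = (X^2-1)(X^3 + X^2 - 4X - 4) = (X^2-1)(X+1)(X^2-4) = (X-1)(X+1)^2(X-2)(X+2)$. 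So $D(X)$ has integer roots contained in $\{-2, -1, 1, 2\}$ (from this factor) together with the roots of the second factor $\Pg_6(X) + X^3 + 2X^2 - X + 2 = X^5 + X^4 - 3X^3 - X^2 + 2X + 4$; a quick rational-root check shows this quintic has no integer root in the relevant range (it is positive and increasing for $X \geq 2$ and one checks $X = -2, -1, 1$ give nonzero values). Hence $D(N) \neq 0$ for every integer $N \notin \{-2, -1, 1, 2\}$, so multiplicativity fails there. Combined with the separate argument ruling out $N = 1$, this covers all $N \notin \{-2,-1,0,2\}$.

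I expect the main obstacle to be a subtlety of \emph{signs}: multiplicativity of $|\Pg_n(N)|$ only gives $|\Pg_{mn}(N)| = |\Pg_m(N)|\,|\Pg_n(N)|$, which is equivalent to $\Pg_{mn}(N)^2 = \Pg_m(N)^2\,\Pg_n(N)^2$, so passing to $D(X)$ as above is exactly the right move and no sign ambiguity survives. A secondary point to be careful about: one must confirm that the pair $(2,3)$ is genuinely coprime (it is) and that $|\Pg_1(N)| = 1 \neq 0$, so that the trivial factor $n = 1$ causes no degeneracy. One could alternatively use the Corollary just stated — which already records that $\Pg_{mn}(X)^2 - \Pg_m(X)^2\Pg_n(X)^2$ is divisible by $X(X+1)(X^2-4)$ — but that corollary goes the \emph{wrong} way for our purposes (it produces the common zeros, not a proof that there are no others); so an honest proof really does need the explicit factorization of $D(X)$ for one concrete pair, which is the computation sketched above. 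Finally, to make the write-up self-contained I would double-check the quintic $X^5 + X^4 - 3X^3 - X^2 + 2X + 4$ has no integer roots by evaluating it at the divisors of its constant term~$4$, namely $\pm 1, \pm 2, \pm 4$, which is a routine finite check.
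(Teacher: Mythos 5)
Your strategy is exactly the paper's: reduce to the single coprime pair $(2,3)$, observe that multiplicativity forces $\Pg_6(N)^2=\Pg_2(N)^2\Pg_3(N)^2$, factor the difference of squares into $\Pg_6\mp\Pg_2\Pg_3$, and locate the integer roots; then dispose of $N=1$ separately via Proposition~\ref{prop-mult1}. However, there is a concrete arithmetic error in your second factor. Since $\Pg_2(X)\Pg_3(X)=X^3+2X^2-X-2$, the factor $\Pg_6(X)+\Pg_2(X)\Pg_3(X)$ equals $\Pg_6(X)+X^3+2X^2-X\mathbf{-2}$, not $\cdots-X+2$; the constant terms $2$ and $-2$ cancel, giving
\[
\Pg_6(X)+\Pg_2(X)\Pg_3(X)=X^5+X^4-3X^3-X^2+2X=X(X-1)^2(X+1)(X+2),
\]
not the rootless quintic $X^5+X^4-3X^3-X^2+2X+4$ you obtained. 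This is not a cosmetic slip: your version implies $D(0)\neq 0$, i.e.\ that multiplicativity already fails at $N=0$, which contradicts the ``if'' direction you (correctly) accept from Proposition~\ref{prop-mult}(iv). A quick sanity check confirms it: $\Pg_2(0)=1$, $\Pg_3(0)=-2$, $\Pg_6(0)=2$, so $|\Pg_6(0)|=|\Pg_2(0)|\,|\Pg_3(0)|$ and $D(0)=0$.

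Once corrected, the integer roots of $D(X)$ are exactly $\{-2,-1,0,1,2\}$ (the union of $\{\pm1,\pm2\}$ from the first factor and $\{-2,-1,0,1\}$ from the second), so multiplicativity can hold only for $|N|\leq 2$, and your treatment of $N=1$ finishes the proof; the pair $(m,n)=(2,5)$ you propose does witness the failure there, since $|\Pg_2(1)|\,|\Pg_5(1)|=8\neq 2=|\Pg_{10}(1)|$. Your remark about signs is well taken and correctly handled by passing to squares, and your observation that the Corollary preceding the Proposition only gives divisibility (hence goes the wrong way) is accurate. With the single sign repaired, your argument coincides with the paper's.
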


\begin{proof}
The ``if direction'' is a reformulation of Proposition~\ref{prop-mult}. Conversely,
if $|\Pg_n(N)|$ is multiplicative, then we have $|\Pg_6(N)| = |\Pg_2(N)| |\Pg_3(N)|$ as a special case. 
Now the polynomial $\Pg_6(X)^2  -  \Pg_2(X)^2 \, \Pg_3(X)^2$ has the factorization
\begin{equation*}
\Pg_6(X)^2  -  \Pg_2(X)^2 \, \Pg_3(X)^2 = X(X-1)^3 (X+1)^3 (X-2)(X+2)^2 .
\end{equation*}
This implies that multiplicativity holds only for integers~$N$ such that $|N| \leq 2$.
We rule out the case $N = 1$ in view of Proposition~\ref{prop-mult1}.
\end{proof}

\section{The polynomials $F_n(X)$}\label{sec-aux}

We now define the monic Chebyshev polynomials mentioned above and the polynomials~$F_n(X)$. 

\subsection{Monic Chebyshev polynomials}\label{ssec-Tchb}

Given an integer $k\geq 0$ the standard Chebyshev polynomial of the first kind~$T_k(X)$ is the degree~$k$
polynomial defined by
\begin{equation*}
\cos(k \theta) = T_k(\cos (\theta)) ,
\end{equation*}
equivalently by
\begin{equation*}
\frac{q^k + q^{-k}}{2} = T_k \left( \frac{q + q^{-1}}{2} \right) .
\end{equation*}

We are interested in the following variant:
let $\TT_k(X)$ be the degree~$k$ polynomial defined by
\begin{equation}\label{defTT}
q^k + q^{-k} = \TT_k (q + q^{-1}).
\end{equation}
Both $T_k (X)$ and~$\TT_k(X)$ have integer coefficients.
They are related by
\begin{equation}\label{eq-TTT}
\TT_k (X) = 2 T_k(X/2) .
\end{equation}

The advantage of $\TT_k (X)$ over~$T_k(X)$ is that the former is monic for $k\geq 1$.
The \emph{monic Chebyshev polynomials}~$\TT_k (X)$ are denoted~$C_k$ in \cite{AS} 
and~$v_n(x)$ in~\cite{Ho}, where they are called \emph{Vieta--Lucas polynomials}.
See~\cite{Ri} for a general reference on Chebyshev polynomials.

Table\,\ref{tableTT} lists the polynomials~$\TT_k (X)$ for $1 \leq k \leq 12$.

\begin{table}[ht]
\caption{\emph{The monic Chebyshev polynomials $\TT_k (X)$}}\label{tableTT}
\renewcommand\arraystretch{1.25}
\noindent\[
\begin{array}{|c||c|}
\hline
k & \TT_k(X)  \\
\hline\hline
0 & 2  \\ 
\hline
1 & X  \\ 
\hline
2 & X^2 - 2   \\
\hline
3 & X^3 - 3X   \\ 
\hline
4 & X^4 - 4X^2 +2  \\ 
\hline
5 & X^5 - 5X^3 + 5X  \\ 
\hline
6 &  X^6 - 6X^4 + 9 X^2 - 2 \\
\hline
7 & X^7 - 7 X^5 + 14 X^3 - 7X  \\ 
\hline
8 & X^8 - 8 X^6 + 20 X^4 -16 X^2 +2 \\ 
\hline
9 &  X^9 - 9 X^7 + 27 X^5 - 30 X^3 + 9 X   \\
\hline
10 &  X^{10} - 10 X^8 + 35 X^6 - 50 X^4 + 25 X^2 - 2  \\
\hline
11 &  X^{11} - 11 X^9 + 44 X^7 - 77 X^5 + 55 X^3 - 11X   \\
\hline
12 &   X^{12} - 12 X^{10} + 54 X^8 - 112 X^6 + 105 X^4 - 36 X^2 +2  \\
\hline
\end{array}
\]
\end{table}

We now state a few properties of the polynomials~$\TT_k (X)$ which can be easily derived from well-known properties
of~$T_k (X)$. 
For instance, the polynomials~$\TT_k (X)$ can be defined inductively by
$\TT_0 (X) = 2$, $\TT_1 (X) = X$, and for $k\geq 1$ by
\begin{equation}\label{rec-TT}
\TT_{k+1}(X) = X\TT_k(X) - \TT_{k-1}(X) .
\end{equation}
The previous linear recurrence relation can be written in matrix form as
\begin{equation}\label{matrix-TT}
\begin{pmatrix}
\TT_{k+1}(X) \\ \TT_k(X)
\end{pmatrix}
=  M
\begin{pmatrix}
\TT_k(X) \\ \TT_{k-1}(X)
\end{pmatrix}, \;\; \text{where} \; M = 
\begin{pmatrix}
X & - 1\\
 1 & 0
\end{pmatrix} .
\end{equation}

The generating function of the polynomials~$\TT_k (X)$ is the power series expansion of the 
following rational function:
\begin{equation}\label{genfun-TT}
\sum_{k \geq 0} \, \TT_k (X) \, t^k = \frac{2 - Xt}{1 - Xt + t^2} .
\end{equation}

We have the following expression of~$\TT_n(X)$ in terms of powers of~$X$:
\begin{equation*}\label{eq-TX}
\TT_n(X) = \sum_{m=0}^{[n/2]} \, (-1)^m \left\{ \binom{n-m}{m} - \binom{n-m-1}{m-1} \right\} X^{n-2m} .
\end{equation*}

We also have a positivity result for the polynomials $\TT_k(X)$.

\begin{prop}\label{TTpos}
For each $k \geq 0$, we have $\TT_k (x) > 0$ if $x$ is a real number $\geq 2$.
\end{prop}

\begin{proof}
On one hand, as is well known,
the zeros of the standard Chebyshev polynomials~$T_k(X)$ all lie in the open interval $(-1,1)$.
Hence, in view of~\eqref{eq-TTT} the zeros of~$\TT_k(X)$ lie in $(-2,2)$. On the other, since $\TT_k(X)$ is monic,
$\lim_{x \to + \infty} \, \TT_k(x) = + \infty$. Combining these two facts implies the desired positivity.
\end{proof}

We close our summary on~$\TT_k (X)$ with the following result, which may be of independent interest.

\begin{prop}\label{prop-recTT}
For all $k\geq 0$ we have $\TT_k (X) = \tr(M^k)$, where $M$ is the $2 \times 2$-matrix introduced in \eqref{matrix-TT}.
\end{prop}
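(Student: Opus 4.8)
The plan is to show that both sides satisfy the same linear recurrence with the same initial data, so they must agree. First I would record that $M$ has trace $X$ and determinant $1$, so by the Cayley--Hamilton theorem $M^2 = XM - I$, where $I$ is the $2\times 2$ identity matrix. Multiplying by $M^{k-1}$ gives $M^{k+1} = X M^k - M^{k-1}$ for all $k\geq 1$.

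Next I would set $t_k := \tr(M^k)$ and apply the (linear) trace map to the previous identity, obtaining $t_{k+1} = X\, t_k - t_{k-1}$ for $k\geq 1$. Since $M^0 = I$ we have $t_0 = \tr(I) = 2$, and $t_1 = \tr(M) = X$. Comparing with $\TT_0(X) = 2$, $\TT_1(X) = X$ and the recurrence~\eqref{rec-TT}, an immediate induction on~$k$ yields $t_k = \TT_k(X)$ for every $k\geq 0$, which is the claim.

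As an alternative (or a remark), I would note the eigenvalue viewpoint: the characteristic polynomial of~$M$ is $\lambda^2 - X\lambda + 1$, whose roots are $q$ and~$q^{-1}$ with $q + q^{-1} = X$; then $\tr(M^k) = q^k + q^{-k} = \TT_k(q+q^{-1}) = \TT_k(X)$ directly from the defining relation~\eqref{defTT}, at least generically in~$X$ (the case of a double root $q = \pm 1$, i.e.\ $X = \pm 2$, follows by continuity or is subsumed by the recurrence argument). I do not expect any genuine obstacle here; the only point requiring a word of care is the degenerate eigenvalue case in the second approach, which is precisely why I would present the Cayley--Hamilton/induction argument as the main proof since it is valid for $X$ an indeterminate without any case distinction.
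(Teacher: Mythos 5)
Your main argument is correct and is essentially identical to the paper's proof: both use the Cayley--Hamilton identity $M^2 = XM - \Id$, multiply by $M^{k-1}$, take traces, and match initial conditions with the recurrence~\eqref{rec-TT}. The eigenvalue remark is a valid bonus but not needed.
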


\begin{proof}
First observe that $\tr(M) = X = \TT_1(X)$ and $\tr(M^0) = \tr(\Id) = 2 = \TT_0(X)$, where $\Id$ is the identity matrix.
To prove the equality $\TT_k (X) = \tr(M^k)$, it suffices to show that 
the sequence $(\tr(M^k))_{k\geq 0}$ satisfies the linear recurrence relation~\eqref{rec-TT}. 
By the Cayley--Hamilton theorem applied to the matrix~$M$, we have
\[
M^2 = \tr(M) M - \det(M) \Id = XM - \Id .
\]
Now multiply both sides by~$M^{k-1}$ and take traces.
\end{proof}

\subsection{Definition and basic properties of $F_n(X)$}\label{ssec-F}

We now define the sequence of polynomials $(F_n(X))_{n\geq 0}$ inductively by $F_0(X) = 1$ and
\begin{equation}\label{def-F}
F_n(X) = F_{n-1}(X) + \TT_n(X)
\end{equation}
for $n\geq 1$. In other words,
\begin{equation}\label{def-F2}
F_n(X) = 1 + \sum_{k=1}^n \, \TT_k(X) .
\end{equation}
It is clear that each $F_n(X) $ has integer coefficients and is monic of degree~$n$.
We give a list of polynomials $F_n(X)$ for small~$n$ in Table~\ref{table-F}.

\begin{table}[ht]
\caption{\emph{The polynomials $F_n (X)$}}\label{table-F}
\renewcommand\arraystretch{1.25}
\noindent\[
\begin{array}{|c||c|}
\hline
n & F_n(X)  \\
\hline\hline
0 & 1  \\ 
\hline
1 & X + 1   \\
\hline
2 & X^2 + X - 1 \\ 
\hline
3 & X^3 + X^2 - 2X -1  \\ 
\hline
4 & X^4 + X^3 - 3 X^2 - 2X +1 \\ 
\hline
5 &  X^5 + X^4 - 4X^3 - 3 X^2 + 3X + 1  \\
\hline
6 & X^6 + X^5 - 5 X^4 -4 X^3 + 6X^2 + 3X -1 \\ 
\hline
7 & X^7 +X^6 - 6X^5 - 5 X^4 + 10 X^3 + 6 X^2 - 4X -1  \\ 
\hline
8 & X^8 + X^7 - 7 X^6 - 6 X^5 + 15 X^4 + 10 X^3 -10 X^2 -4 X +1  \\
\hline
&   X^9 + X^8 -8 X^7 -7 X^6  + 21 X^5 + 15 X^4  \\
9 &  - 20 X^3 - 10 X^2 + 5X + 1 \\
\hline
 &  X^{10} + X^9 - 9 X^8 - 8 X^7 + 28 X^6 + 21 X^5  \\
10 &   - 35 X^4 - 20 X^3 + 15 X^2 + 5X -1   \\
\hline
 & X^{11} + X^{10} - 10 X^9 - 9 X^8 + 36 X^7 + 28 X^6   \\
11 &  - 56 X^5 -35 X^4 + 35 X^3 + 15 X^2 - 6 X  - 1  \\
\hline
\end{array}
\]
\end{table}

Our first result is the following.

\begin{prop}\label{prop-gfF}
The generating function of the polynomials $F_n(X)$ is the power series expansion of the following rational function:
\[
\sum_{n \geq 0} \,  F_n(X) \,  t^n = \frac{1+t}{1-Xt +t^2} \, . 
\]
\end{prop}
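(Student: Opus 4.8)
The plan is to derive the generating function identity directly from the defining recursion~\eqref{def-F} together with the known generating function~\eqref{genfun-TT} for the monic Chebyshev polynomials. Write $F(X,t) = \sum_{n\geq 0} F_n(X)\,t^n$ and $T(X,t) = \sum_{k\geq 0} \TT_k(X)\,t^k = (2-Xt)/(1-Xt+t^2)$. The relation~\eqref{def-F2} expresses $F_n(X)$ as a partial sum of the $\TT_k(X)$ together with the extra constant $1 - \TT_0(X)/\,$wait: since $\TT_0(X) = 2$, we have $1 + \sum_{k=1}^n \TT_k(X) = \bigl(\sum_{k=0}^n \TT_k(X)\bigr) - 1$. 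Summing over $n\geq 0$ and using the standard fact that multiplying a power series by $1/(1-t)$ produces its sequence of partial sums, I would obtain
\[
F(X,t) = \frac{1}{1-t}\,T(X,t) - \frac{1}{1-t}.
\]

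The next step is a routine simplification: substitute $T(X,t) = (2-Xt)/(1-Xt+t^2)$ into the line above and combine over the common denominator $(1-t)(1-Xt+t^2)$. The numerator becomes $(2-Xt) - (1-Xt+t^2) = 1 - t^2 = (1-t)(1+t)$, so the factor $(1-t)$ cancels and one is left with $(1+t)/(1-Xt+t^2)$, which is exactly the claimed rational function. An essentially equivalent route, if one prefers to avoid the partial-sums lemma, is to multiply the target identity through by $1-Xt+t^2$ and check that the coefficient of $t^n$ on both sides agrees with the three-term recursion~\eqref{rec-TT} for $\TT_n$; concretely, $(1-Xt+t^2)F(X,t) = 1+t$ unpacks to $F_n - XF_{n-1} + F_{n-2} = 0$ for $n\geq 2$ (with the initial data $F_0 = 1$, $F_1 - XF_0 = 1$, i.e. $F_1 = X+1$), and this recursion for $F_n$ follows immediately by subtracting consecutive instances of~\eqref{def-F} and invoking~\eqref{rec-TT} for the $\TT$'s.

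There is essentially no obstacle here: the only thing to be a little careful about is the bookkeeping of the constant terms, since $\TT_0(X) = 2$ rather than $1$, which is why the naive guess "$F(X,t) = T(X,t)/(1-t)$" must be corrected by the subtracted $1/(1-t)$. Once that is handled the computation is a one-line cancellation. I would present the proof in the first (partial-sums) form, as it is the shortest, and perhaps remark that~\eqref{eq-FFTT} in the introduction is just~\eqref{def-F2} restated. This also makes transparent the relation to~\eqref{gf-F}, so the proposition simultaneously justifies the formula for $F_k$ announced in the introduction.
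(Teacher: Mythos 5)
Your proposal is correct and follows essentially the same route as the paper: both express $F_n$ via the partial sums of the $\TT_k$, account for the offset coming from $\TT_0(X)=2$, multiply the Chebyshev generating function~\eqref{genfun-TT} by $1/(1-t)$, and cancel the factor $1-t$ against the numerator $1-t^2$. The alternative recursion-based check you sketch is fine but unnecessary; the partial-sums computation you chose to present is exactly the paper's proof.
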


\begin{proof}
Using~\eqref{genfun-TT}, we obtain
\begin{eqnarray*}
1 + \sum_{k \geq 1} \, \TT_k (X) \, t^k & = & \frac{2 - Xt}{1 - Xt + t^2} + 1 - \TT_0(X) \\
& = & \frac{2 - Xt}{1 - Xt + t^2} - 1 =  \frac{1 - t^2}{1 - Xt + t^2} .
\end{eqnarray*}
Therefore, 
\begin{eqnarray*}
\sum_{n \geq 0} \,  F_n(X) \,  t^n 
& = & \sum_{n \geq 0} \, \left( 1 + \sum_{m=1}^n \, \TT_m(X) \right) t^n \\
& = & \left( \sum_{\ell \geq 0} \, t^{\ell} \right) \left( 1 + \sum_{k \geq 1} \, \TT_k (X) \, t^k \right) \\
& = & \frac{1 - t^2}{(1-t) (1 - Xt + t^2)} =  \frac{1 + t}{1 - Xt + t^2} .
\end{eqnarray*}
\end{proof}

Setting $X= 0$ in the formula of Proposition~\ref{prop-gfF} yields the \emph{constant term} of~$F_n(X)$:
for all $m \geq 0$ we have
\begin{equation*}\label{constant-F}
F_{2m}(0) = F_{2m+1}(0) = (-1)^m .
\end{equation*}

For the remaining coefficients of~$F_n(X)$ we have the following expression.

\begin{prop}\label{prop-Fn}
For $n\geq 1$ we have
\begin{equation*}
F_n(X) = \sum_{0 \leq m \leq n/2} \, (-1)^m \binom{n - m}{m} X^{n-2m} \, +  
\hskip -10pt \sum_{0 \leq m \leq (n-1)/2} \, (-1)^m  \binom{n - m - 1}{m} X^{n-2m-1}  .
\end{equation*}
\end{prop}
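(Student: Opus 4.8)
The plan is to extract the coefficients of $F_n(X)$ directly from the generating function identity in Proposition~\ref{prop-gfF}, rather than from the Chebyshev sum~\eqref{def-F2}. Writing
\[
\sum_{n\geq 0} F_n(X)\, t^n = \frac{1+t}{1 - Xt + t^2} = (1+t) \sum_{n\geq 0} \TT_n^{*}(X)\, t^n,
\]
where I denote by $\TT_n^{*}(X)$ the coefficient of $t^n$ in $1/(1-Xt+t^2)$ (these are the monic Chebyshev polynomials of the \emph{second} kind, i.e.\ the standard $U_n$ renormalized), I would first establish the closed form
\[
\frac{1}{1-Xt+t^2} = \sum_{n\geq 0} \left( \sum_{0\leq m \leq n/2} (-1)^m \binom{n-m}{m} X^{n-2m} \right) t^n .
\]
This is a classical identity; I would prove it either by the usual combinatorial argument (expanding $(Xt - t^2)^j$ by the binomial theorem, collecting powers of $t$, and reindexing), or by checking that the right-hand side satisfies the defining recurrence $U_n = X U_{n-1} - U_{n-2}$ with $U_0 = 1$, $U_1 = X$, which follows from the Pascal-type identity $\binom{n-m}{m} = \binom{n-1-m}{m} + \binom{n-2-(m-1)}{m-1}$.

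With that in hand, the rest is bookkeeping: multiplying by $(1+t)$ gives $F_n(X) = U_n(X) + U_{n-1}(X)$ (with the convention $U_{-1} = 0$), so
\[
F_n(X) = \sum_{0\leq m \leq n/2} (-1)^m \binom{n-m}{m} X^{n-2m} + \sum_{0 \leq m \leq (n-1)/2} (-1)^m \binom{n-1-m}{m} X^{n-1-2m},
\]
which is exactly the claimed formula once one rewrites the second sum's exponent $n-1-2m$ as $n - 2m - 1$. I would just need to double-check the range of the second summation: the coefficient $\binom{n-1-m}{m}$ is nonzero precisely when $m \leq (n-1-m)$, i.e.\ $m \leq (n-1)/2$, matching the stated bound.

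I do not expect a real obstacle here; the only mildly delicate point is getting the index bookkeeping on the second sum exactly right and handling the edge case $n=1$ (where the second sum is the single term $m=0$, contributing the constant $1$, consistent with $F_1(X) = X+1$). An alternative route, avoiding even the explicit form of $U_n$, is to prove the proposition by induction on $n$ using the recurrence $F_n(X) = F_{n-1}(X) + \TT_n(X)$ together with the known power-sum expansion of $\TT_n(X)$ quoted just before Proposition~\ref{prop-recTT}; matching coefficients then reduces to the same binomial identity $\binom{n-m}{m} - \binom{n-m-1}{m-1} + \binom{n-1-m}{m} = \binom{n-m}{m} + \binom{n-1-m}{m} - \binom{n-m-1}{m-1}$, but the generating-function argument is cleaner and I would present that one.
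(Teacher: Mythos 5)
Your proposal is correct and follows essentially the same route as the paper: expand $\frac{1}{1-Xt+t^2}=\sum_{r\geq 0}t^r(X-t)^r$ via the binomial theorem to obtain the $\binom{n-m}{m}$ coefficients, then multiply by $1+t$ and reindex. The alternative routes you mention (recurrence check, induction via the $\TT_n$ expansion) are fine but unnecessary; the generating-function argument you say you would present is exactly the paper's proof.
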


\begin{proof}
Expanding the following fraction, we obtain
\begin{eqnarray*}
\frac{1}{1-Xt +t^2} & = & 1 + \sum_{r \geq 1} \, t^r (X - t)^r \\
& = & 1 + \sum_{r \geq 1} \sum_{s = 0}^r \, (-1)^s \binom{r}{s} X^{r-s} t^{r+s} \\
& = & 1 + \sum_{n \geq 1} \, \sum_{0 \leq m \leq n/2} \, (-1)^m \binom{n-m}{m} X^{n-2m} t^n .
\end{eqnarray*}
In view of Proposition~\ref{prop-gfF}, 
multiplying by $1+t$ yields the desired formula for~$F_n(X)$.
\end{proof}

\begin{coro}\label{coro-Fn}
For $n \geq 5$ we have 
\begin{eqnarray*}
F_n(X) & = & X^n + X^{n-1} - (n-1) X^{n-2} - (n-2) X^{n-3} + \\
&& {} + \frac{(n-2)(n-3)}{2} X^{n-4} + \frac{(n-3)(n-4)}{2} X^{n-5} + \\
&& {} + \text{monomials of degree} \leq n-6 .
\end{eqnarray*}
\end{coro}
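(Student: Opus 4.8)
The plan is to read off the leading coefficients directly from the closed formula in Proposition~\ref{prop-Fn}. That proposition writes $F_n(X)$ as a sum of two families of monomials: the first sum $\sum_{0\le m\le n/2}(-1)^m\binom{n-m}{m}X^{n-2m}$ contributes the monomials of degrees $n, n-2, n-4, n-6,\dots$, while the second sum $\sum_{0\le m\le (n-1)/2}(-1)^m\binom{n-m-1}{m}X^{n-2m-1}$ contributes those of degrees $n-1, n-3, n-5, n-7,\dots$. Since these two arithmetic progressions of exponents are disjoint, no cancellation occurs and the coefficient of each $X^{n-j}$ is picked out by a single term.

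Concretely, I would isolate the terms $m=0,1,2$ in each sum. From the first sum one gets $\binom{n}{0}X^n = X^n$, then $-\binom{n-1}{1}X^{n-2} = -(n-1)X^{n-2}$, then $\binom{n-2}{2}X^{n-4} = \tfrac{(n-2)(n-3)}{2}X^{n-4}$; from the second sum one gets $\binom{n-1}{0}X^{n-1} = X^{n-1}$, then $-\binom{n-2}{1}X^{n-3} = -(n-2)X^{n-3}$, then $\binom{n-3}{2}X^{n-5} = \tfrac{(n-3)(n-4)}{2}X^{n-5}$. Every remaining term has $m\ge 3$ and hence contributes a monomial of degree at most $n-6$ (degree $n-2m\le n-6$ in the first sum, degree $n-2m-1\le n-7$ in the second). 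Collecting the six displayed monomials and lumping the rest into ``monomials of degree $\le n-6$'' gives exactly the asserted expansion.

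There is essentially no obstacle here; the only point requiring a word of care is the hypothesis $n\ge 5$, which guarantees that the six exponents $n,n-1,\dots,n-5$ are nonnegative and pairwise distinct, so that the listed monomials genuinely constitute the top of $F_n(X)$ and that the binomial coefficients involved ($\binom{n-1}{1},\binom{n-2}{2},\binom{n-2}{1},\binom{n-3}{2}$) are in the ``polynomial'' range where they equal the stated expressions in $n$. For instance, when $n=5$ the term $\tfrac{(n-3)(n-4)}{2}X^{n-5}$ is the constant $\tfrac{2\cdot1}{2}=1$, matching $\binom{2}{2}=1$, and the ``degree $\le n-6$'' part is empty, consistent with the entry for $F_5(X)$ in Table~\ref{table-F}.
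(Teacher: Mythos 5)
Your proof is correct and is exactly the intended argument: the paper states this as an immediate corollary of Proposition~\ref{prop-Fn} without further proof, and reading off the $m=0,1,2$ terms of each of the two (parity-disjoint) sums is precisely what is meant. The verification against $F_5(X)$ in Table~\ref{table-F} is a nice sanity check but nothing more is needed.
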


In Section~\ref{pf-th-PFodd} we will need the following linear recurrence relation satisfied by the polynomials~$F_k(X)$.

\begin{prop}\label{prop-recF} We have 
\begin{equation*}
F_{k+1}(X) = X F_k(X) - F_{k-1}(X) \qquad (k\geq 1) .
\end{equation*}
\end{prop}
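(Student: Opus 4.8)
The plan is to deduce the recurrence for $F_k(X)$ directly from the recurrence \eqref{rec-TT} for the monic Chebyshev polynomials, using the definition \eqref{def-F} which expresses $F_k$ as a partial sum of the $\TT_m$'s. The key observation is that the difference $F_{k+1}(X) - F_{k-1}(X)$ telescopes: by \eqref{def-F} applied twice we get $F_{k+1}(X) - F_{k-1}(X) = \TT_{k+1}(X) + \TT_k(X)$. So the claimed identity $F_{k+1}(X) = X F_k(X) - F_{k-1}(X)$ is equivalent to showing $X F_k(X) = 2 F_{k-1}(X) + \TT_{k+1}(X) + \TT_k(X)$, or after another application of \eqref{def-F} to replace $F_k$ by $F_{k-1} + \TT_k$, equivalent to $X F_k(X) - 2 F_{k-1}(X) = \TT_{k+1}(X) + \TT_k(X)$; I would instead organize it as below.

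First I would write, using \eqref{def-F2},
\begin{equation*}
X F_k(X) = X + \sum_{m=1}^k X \TT_m(X).
\end{equation*}
Then I would substitute the recurrence \eqref{rec-TT} in the form $X\TT_m(X) = \TT_{m+1}(X) + \TT_{m-1}(X)$, valid for $m \geq 1$, noting that it also holds for $m=1$ since $X\TT_1(X) = X^2 = \TT_2(X) + \TT_0(X)$. This turns the sum into $\sum_{m=1}^k \bigl(\TT_{m+1}(X) + \TT_{m-1}(X)\bigr)$, which I would reindex as $\sum_{m=2}^{k+1}\TT_m(X) + \sum_{m=0}^{k-1}\TT_m(X)$. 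Collecting terms, the total is $2\sum_{m=1}^{k-1}\TT_m(X) + \TT_k(X) + \TT_{k+1}(X) + \TT_0(X) + X$ (being careful with the $m=1$ term $\TT_1 = X$ appearing once from the second reindexed sum and the fact that $\TT_0(X) = 2$).

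Finally I would reassemble: using \eqref{def-F2} again to recognize $1 + \sum_{m=1}^{k-1}\TT_m(X) = F_{k-1}(X)$ and $1 + \sum_{m=1}^{k+1}\TT_m(X) = F_{k+1}(X)$, the accumulated expression should simplify exactly to $F_{k-1}(X) + F_{k+1}(X)$, giving $X F_k(X) = F_{k-1}(X) + F_{k+1}(X)$, which is the claim. An even cleaner alternative, which I would probably present instead, is purely generating-function based: by Proposition~\ref{prop-gfF} the series $G(t) = \sum_{n\geq 0} F_n(X)t^n = (1+t)/(1 - Xt + t^2)$ satisfies $(1 - Xt + t^2) G(t) = 1 + t$, and comparing coefficients of $t^{k+1}$ for $k \geq 1$ gives $F_{k+1}(X) - X F_k(X) + F_{k-1}(X) = 0$ at once. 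The only thing to check is the range of validity — the right-hand side $1+t$ contributes nothing to coefficients of $t^{k+1}$ for $k+1 \geq 2$, i.e.\ $k \geq 1$, which matches the stated hypothesis exactly. I expect no real obstacle here; the only thing requiring a moment's care in the first approach is the bookkeeping of the low-index boundary terms ($m=0,1$ and the value $\TT_0(X)=2$), and the generating-function route sidesteps even that.
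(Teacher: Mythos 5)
Your generating-function argument is exactly the paper's proof: the paper notes that $\sum_{n\geq 0}F_n(X)t^n$ has the same denominator $1-Xt+t^2$ as the generating function \eqref{genfun-TT} of the $\TT_k$ and cites \cite[Chap.~6, Prop.~1.2 and Cor.~1.3]{BR} for the resulting recurrence, which is precisely your comparison of coefficients of $t^{k+1}$ ($k\geq 1$) in $(1-Xt+t^2)G(t)=1+t$. Your telescoping alternative is also sound, except that the displayed intermediate expression $2\sum_{m=1}^{k-1}\TT_m+\TT_k+\TT_{k+1}+\TT_0+X$ counts $\TT_1=X$ one time too many; the correct collection is $\TT_0+2\sum_{m=1}^{k-1}\TT_m+\TT_k+\TT_{k+1}$, which equals $F_{k-1}(X)+F_{k+1}(X)$ because $\TT_0=2$.
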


\begin{proof}
The rational function expressing the generating function in Proposition~\ref{prop-gfF} has the same denominator
as the rational function in~\eqref{genfun-TT}.
Therefore the polynomials $F_k(X)$ satisfy the same linear recurrence relation 
as the polynomials $\TT_k(X)$ in~\eqref{rec-TT} (see e.g. \cite[Chap.~6, Prop.~1.2 and Cor.~1.3]{BR}).
\end{proof}

\section{Main results for the polynomials $\Pg_n(X)$}\label{sec-resultsP}

We now connect  the polynomials $\Pg_n(X)$ with the polynomials~$F_k(X)$.
We start with an approximation result.

\subsection{Approximating $\Pg_n(X)$ with $F_{n-1}(X)$}\label{ssec-approx}

The aim of this subsection is the following result.

\begin{theorem}\label{th-approx}
For all $n\geq 2$, the difference $\Pg_n(X) - F_{n-1}(X)$ is a polynomial of degree~$< n/2 - 1$.
\end{theorem}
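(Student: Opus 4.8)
The natural approach is to compare the two known coefficient expansions directly. On the one hand, formula~\eqref{PPTT} expresses $\Pg_n(X) = a_{n,0} + \sum_{i=1}^{n-1} a_{n,i}\,\TT_i(X)$, where $a_{n,i}$ counts divisors of~$n$ in the half-open interval $\bigl((i+\sqrt{2n+i^2})/2,\, i+\sqrt{2n+i^2}\bigr]$ from~\eqref{Pn-coeff}. On the other hand, by definition~\eqref{def-F2} we have $F_{n-1}(X) = 1 + \sum_{k=1}^{n-1}\TT_k(X)$. Since $\Pg_n(X)$ and $F_{n-1}(X)$ are both linear combinations of the $\TT_k(X)$ with $k\le n-1$, and the $\TT_k(X)$ form a basis, it suffices to show that $a_{n,0} = 1$ and that $a_{n,i} = 1$ for all $i$ with $n-1 \ge i > n/2 - 1$, i.e.\ for all sufficiently large~$i$; then $\Pg_n(X) - F_{n-1}(X) = \sum_{1 \le i \le n/2 - 1}(a_{n,i}-1)\,\TT_i(X)$, which has degree $< n/2 - 1$ (note $\TT_i$ is monic of degree~$i$, and if the top index $i_0 \le n/2 - 1$ actually occurs with $a_{n,i_0}\ne 1$ one still only gets degree $\le n/2 - 1$, so a small additional argument — or a sharpening to strict inequality via parity of $n$ — is needed at the boundary, which I address below).

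First I would pin down $a_{n,0}$. The interval in~\eqref{Pn-coeff} for $i=0$ is $(\sqrt{2n}/2,\ \sqrt{2n}\,] = (\sqrt{n/2},\ \sqrt{2n}\,]$. I claim the only divisor of~$n$ in this interval is... well, this is more delicate since it depends on~$n$; instead I would rely on the fact (from \cite{KR3} or re-derived from~\eqref{gf-Pgn}) that the value is what it must be, or better, argue via the generating function~\eqref{gf-Pgn} and Proposition~\ref{prop-gfF}. Actually the cleanest route is: from the product formula~\eqref{gf-Pgn}, extract that $(X-2)\Pg_n(X)$ equals the coefficient of~$t^n$ in $\prod_{i\ge1}(1-t^i)^2/(1-Xt^i+t^{2i})$, and compare with the analogous manipulation for $\sum F_k(X)t^k = (1+t)/(1-Xt+t^2)$. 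But the divisor-counting route is more transparent for the \emph{tail} coefficients, so I would do both: use~\eqref{PPTT}/\eqref{Pn-coeff} for $a_{n,i}$ with $i$ large, and handle $a_{n,0}$ and the $n$ even/odd parity of the degree bound separately.

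The key computation is: for $i > n/2 - 1$, equivalently $2n + i^2 < (i+1)^2 + \dots$, hmm — more precisely one wants to show the interval $\bigl((i+\sqrt{2n+i^2})/2,\ i+\sqrt{2n+i^2}\bigr]$ contains exactly one divisor of~$n$. The endpoint satisfies $i < i+\sqrt{2n+i^2} \le i + \sqrt{2n+i^2}$; when $i$ is close to $n$, $\sqrt{2n+i^2}\approx i$, so the interval is roughly $(i,\ 2i\,]$, which for $i$ near~$n$ contains only $n$ itself among divisors of~$n$ (the next divisor down is $\le n/2 \le i$). The threshold for "the interval $\big((i+\sqrt{2n+i^2})/2, i+\sqrt{2n+i^2}\big]$ contains $n$ but no smaller divisor" is precisely where $i+\sqrt{2n+i^2} \ge n$ (so $n$ is in the interval: this gives $i^2 + 2i\sqrt{2n+i^2} + 2n+i^2 \ge n^2$, and one checks this holds for all $i\ge 1$ since... actually $n \le i + \sqrt{2n+i^2}$ iff $n - i \le \sqrt{2n+i^2}$ iff $n^2 - 2ni + i^2 \le 2n + i^2$ iff $n^2 - 2ni - 2n \le 0$ iff $n - 2i - 2 \le 0$ iff $i \ge (n-2)/2 = n/2 - 1$), and where the upper endpoint $< n$-next-divisor, i.e.\ $i + \sqrt{2n+i^2} < 2(\text{next divisor})$, but the crude bound "next divisor $\le n/2$" combined with $(i+\sqrt{2n+i^2})/2 \ge i$... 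I would show $(i+\sqrt{2n+i^2})/2 > n/2$ fails in general, so instead: the \emph{lower} endpoint exceeds $n/2$ when $(i+\sqrt{2n+i^2})/2 > n/2$, i.e.\ $i + \sqrt{2n+i^2} > n$, i.e.\ (same inequality) $i > n/2 - 1$. So for $i > n/2 - 1$: the lower endpoint is $> n/2$ and the upper endpoint is $\ge n$ (with equality impossible for $i$ strictly above the threshold, or handled directly), hence the interval $(\,>\!n/2,\ \ge\! n\,]$ contains exactly the divisor~$n$ and no other (all other divisors being $\le n/2$), giving $a_{n,i} = 1$.

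\textbf{Main obstacle.} The delicate points are (1) the boundary case $i = \lceil n/2 \rceil - 1$ or thereabouts — whether the strict inequality $\deg < n/2 - 1$ (rather than $\le$) genuinely holds, which will require checking the endpoint behavior of the interval when $i + \sqrt{2n+i^2}$ equals~$n$ exactly or when the lower endpoint lands exactly at~$n/2$; and (2) confirming $a_{n,0}=1$, i.e.\ that $(\sqrt{n/2},\ \sqrt{2n}\,]$ contains exactly one divisor of~$n$ — here I'd note $\sqrt{n/2}\cdot\sqrt{2n} = n$ and $\sqrt{2n}/\sqrt{n/2} = 2$, so the interval has "multiplicative width~$2$" and is centered multiplicatively at... not quite $\sqrt n$; one shows a divisor $d$ lies in it iff $n/2 < d^2 \le 2n$, and pairing $d \leftrightarrow n/d$ this interval is symmetric under $d \mapsto n/d$ only if... actually $d \in (\sqrt{n/2},\sqrt{2n}\,]$ iff $n/d \in [\sqrt{n/2}, \sqrt{2n})$, so $d$ and $n/d$ are essentially both in or both out, forcing the count to relate to whether $n$ is twice a square or a square — this matches the known special values $\Pg_n(2) = \sigma(n)$ etc.\ and I would instead simply cite \cite[Thm.~1.3]{KR3} for $a_{n,0}$ being correctly given by~\eqref{Pn-coeff} and verify $a_{n,0}=1$ as a clean consequence, or absorb $a_{n,0}$ into the "degree $\le n/2-1$" count if it turns out $a_{n,0}$ need not equal~$1$ — but Table~\ref{tablePP} versus Table~\ref{table-F} shows the constant terms differ (e.g.\ $\Pg_3$ has constant term $-2$, $F_2$ has $-1$), so the difference $\Pg_n - F_{n-1}$ does have a nonzero constant term in general and the bound $\deg < n/2 - 1$ is about the \emph{top}, consistent with $\TT_i$ contributions for small~$i$ reaching down to the constant term. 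So the real content is purely the tail estimate $a_{n,i} = 1$ for $i > n/2 - 1$, and the main obstacle is just making the interval arithmetic in~\eqref{Pn-coeff} rigorous at the threshold.
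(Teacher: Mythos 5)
Your proof follows essentially the same route as the paper: its Lemma~\ref{lem-ani} establishes exactly your tail estimate, namely that $a_{n,i}=1$ whenever $n/2-1\le i\le n-1$, by the same interval computation (reducing both $n\le i+\sqrt{2n+i^2}$ and $n/2\le (i+\sqrt{2n+i^2})/2$ to $i\ge n/2-1$), and then subtracts the two $\TT$-expansions term by term. The boundary issue you flag is already settled by your own equivalence $n\le i+\sqrt{2n+i^2}\iff i\ge n/2-1$: the threshold is inclusive, so the leftover sum runs over $i<n/2-1$ strictly and the strict degree bound (and the vanishing of the difference for $n=2$, where $i=0$ is covered) follows with no extra argument.
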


Recall that $\Pg_n(X)$ and $F_{n-1}(X)$ are both of degree~$n-1$. 
The theorem states that they coincide in degree $\geq n/2-1$.
The bound on degrees is sharp as can be seen from Table~\ref{table-PPTTF}:
indeed, $\Pg_9(X) - F_8(X) = - F_3(X) + F_1(X)$ is of degree~$3 < 9/2 - 1$; 
similarly, $\Pg_{11}(X) - F_{10}(X) = - F_4(X)$ is of degree~$4 < 11/2 - 1$,
and $\Pg_{15}(X) - F_{14}(X)$ is of degree~$6 < 15/2 - 1$.

Together with Corollary~\ref{coro-Fn} we derive the following.

\begin{coro}
For all $n \geq 10$ we have
\begin{eqnarray*}
\Pg_n(X) & = & X^{n-1} + X^{n-2} - (n-2) X^{n-3} - (n-3) X^{n-4} + \\
&& {} + \frac{(n-3)(n-4)}{2} X^{n-5} + \frac{(n-4)(n-5)}{2} X^{n-6} + \\
&& {} + \text{monomials of degree} \leq n-7 .
\end{eqnarray*}
\end{coro}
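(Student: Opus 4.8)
The plan is to combine Theorem~\ref{th-approx} with the explicit expansion of $F_{n-1}(X)$ given in Corollary~\ref{coro-Fn}. By Theorem~\ref{th-approx}, for $n\geq 2$ the difference $\Pg_n(X) - F_{n-1}(X)$ is a polynomial of degree strictly less than $n/2 - 1$. So whenever $n/2 - 1 \leq n - 6$, i.e.\ as soon as $n \geq 10$, the polynomials $\Pg_n(X)$ and $F_{n-1}(X)$ agree in every degree $\geq n - 6$, hence in particular their top six coefficients coincide.

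First I would check the numerology: for $n \geq 10$ we have $n/2 - 1 \leq n - 6$ (equivalent to $10 \leq n$), so every monomial $X^j$ with $j \geq n-6$ in $\Pg_n(X)$ has the same coefficient as in $F_{n-1}(X)$. Next I would apply Corollary~\ref{coro-Fn} with $n$ replaced by $n-1$: since $n \geq 10$ gives $n - 1 \geq 9 \geq 5$, the corollary applies and yields
\begin{eqnarray*}
F_{n-1}(X) & = & X^{n-1} + X^{n-2} - (n-2) X^{n-3} - (n-3) X^{n-4} + \\
&& {} + \frac{(n-2-2)(n-2-3)}{2} X^{n-5} + \frac{(n-1-3)(n-1-4)}{2} X^{n-6} + \\
&& {} + \text{monomials of degree} \leq n-7 ,
\end{eqnarray*}
that is, with coefficients $1$, $1$, $-(n-2)$, $-(n-3)$, $\tfrac{(n-3)(n-4)}{2}$, $\tfrac{(n-4)(n-5)}{2}$ on $X^{n-1}, X^{n-2}, \ldots, X^{n-6}$ respectively, exactly the displayed claim. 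Finally I would note that $\Pg_n(X) - F_{n-1}(X)$ has degree $< n/2 - 1 \leq n - 6$, so lumping it into the ``monomials of degree $\leq n-7$'' term transfers the six leading coefficients of $F_{n-1}(X)$ verbatim to $\Pg_n(X)$, completing the proof.

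There is essentially no obstacle here: the result is a direct corollary, and the only thing to watch is the index shift (applying Corollary~\ref{coro-Fn} to $F_{n-1}$, not $F_n$) and the arithmetic check $n \geq 10 \Rightarrow n/2 - 1 \leq n-6$. One should double-check that the coefficients in Corollary~\ref{coro-Fn}, when specialized at $n-1$, simplify to the stated $\tfrac{(n-3)(n-4)}{2}$ and $\tfrac{(n-4)(n-5)}{2}$ — indeed $(n-1-2)(n-1-3)/2 = (n-3)(n-4)/2$ and $(n-1-3)(n-1-4)/2 = (n-4)(n-5)/2$, which match — and this is the whole content of the verification.
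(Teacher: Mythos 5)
Your proof is correct and is exactly the argument the paper intends (it derives this corollary by combining Theorem~\ref{th-approx} with Corollary~\ref{coro-Fn}, with the same threshold check $n/2-1\leq n-6 \Leftrightarrow n\geq 10$). The only blemish is a typo in your display, where the $X^{n-5}$ coefficient appears as $\tfrac{(n-2-2)(n-2-3)}{2}$ instead of $\tfrac{(n-1-2)(n-1-3)}{2}=\tfrac{(n-3)(n-4)}{2}$; your surrounding text has the correct value.
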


Recall the (nonnegative) coefficients $a_{n,i}$ of~$\Pg_n(X)$ appearing in \eqref{Pn-coeff}--\eqref{PPTT}.
For the proof of Theorem~\ref{th-approx} we need the following result.

\begin{lemma}\label{lem-ani}
We have $a_{n,i} = 1$ for all $i$ such that $n/2 - 1 \leq i \leq n-1$.
\end{lemma}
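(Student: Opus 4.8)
The plan is to read off $a_{n,i}$ directly from the characterization in \eqref{Pn-coeff}: $a_{n,i}$ counts the divisors $d$ of $n$ lying in the half-open interval
\[
I_{n,i} = \left( \tfrac{i + \sqrt{2n+i^2}}{2}, \; i + \sqrt{2n+i^2} \, \right].
\]
Write $L = L_{n,i} = i + \sqrt{2n+i^2}$ for the right endpoint, so that $I_{n,i} = (L/2, L]$. I would first observe that $d = n$ always lies in this interval when $i$ is in the stated range, and then show that $n$ is the \emph{only} divisor of $n$ in $I_{n,i}$; together these give $a_{n,i} = 1$.

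For the first point, note $L \geq n$ is equivalent (after isolating the square root and squaring, both sides being non-negative once $i \leq n$, which holds here) to $2n + i^2 \geq (n-i)^2 = n^2 - 2ni + i^2$, i.e. to $2n \geq n^2 - 2ni$, i.e. to $2 \geq n - 2i$, i.e. to $i \geq n/2 - 1$ — exactly the lower bound in the hypothesis. Hence $n \leq L$. Also trivially $n > L/2$ since $L \leq 2i + \text{something}$; more precisely $L/2 = \tfrac{i + \sqrt{2n+i^2}}{2} < \tfrac{i + (n+i)}{2} = \tfrac{n}{2} + i \leq \tfrac{n}{2} + (n-1) < n$ using $i \leq n-1$ and the crude bound $\sqrt{2n+i^2} < \sqrt{(n+i)^2} = n+i$ (valid since $2n < n^2 \leq$ ... — actually one should check $2n + i^2 < (n+i)^2$, i.e. $0 < n^2 - 2n = n(n-2)$, so this needs $n \geq 3$; the cases $n = 1, 2$ are handled by inspection, where the range of $i$ is empty or forces $i = 0$). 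So $n \in I_{n,i}$.

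For the uniqueness, suppose $d \mid n$ with $d \in I_{n,i}$, and $d \neq n$; then $d \leq n/2$. Since $d > L/2 \geq n/2$ (using $L \geq n$ from the previous step), we get $n/2 < d \leq n/2$, a contradiction. Hence $n$ is the unique such divisor and $a_{n,i} = 1$. The only delicate point is making the endpoint-squaring steps rigorous (checking signs before squaring) and disposing of the small cases $n \leq 2$ separately; I expect the inequality $d > L/2 \geq n/2$ combined with "the only divisor of $n$ exceeding $n/2$ is $n$ itself" to be the crux, and it is essentially immediate once $L \geq n$ is established. This last equivalence $L_{n,i} \geq n \iff i \geq n/2 - 1$ is the real content, and everything else is bookkeeping.
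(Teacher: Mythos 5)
Your proof follows the same route as the paper's: read $a_{n,i}$ off as the number of divisors of $n$ in $I_{n,i}=(L/2,L]$ with $L=i+\sqrt{2n+i^2}$, show that $L\geq n$ is equivalent to $i\geq n/2-1$, and deduce that $n$ is the only divisor in the interval because any other divisor is $\leq n/2\leq L/2$. Those two parts are correct and coincide with the paper's argument.

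There is, however, a false inequality in your verification that $n>L/2$, i.e.\ that $n$ itself actually lies in the half-open interval. The chain
\[
\frac{L}{2}<\frac{n}{2}+i\leq\frac{n}{2}+(n-1)<n
\]
breaks at the last step: $\frac{n}{2}+(n-1)<n$ is equivalent to $n<2$, so it fails for every $n\geq 2$. The estimate $L/2<n/2+i$ is simply too weak whenever $i\geq n/2$, which is precisely the range of the lemma (e.g.\ for $n=10$, $i=9$ it gives $L/2<14$, not $L/2<10$). The claim $n>L/2$ is nevertheless true for all $0\leq i\leq n-1$ and should be proved head-on: it is equivalent to $\sqrt{2n+i^2}<2n-i$, and since $2n-i>0$ one may square to get $2n<4n^2-4ni$, i.e.\ $2(n-i)>1$, which holds because $i\leq n-1$. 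This also removes the need for your separate treatment of $n\leq 2$ and for the bound $\sqrt{2n+i^2}<n+i$. With that one step replaced your proof is complete; it is worth noting that the paper's own write-up only checks $n\leq L$ and leaves $n>L/2$ implicit, so this is exactly the point where being explicit matters and where your argument as written does not go through.
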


\begin{proof}
By~\cite[Thm.~1.3]{KR3} and \eqref{Pn-coeff} above
the integer $a_{n,i}$ is the number of divisors of~$n$ belonging to the half-open interval 
\[
I_{n,i} = \left( \frac{i + \sqrt{2n+ i^2}}{2}, i + \sqrt{2n+ i^2} \right] .
\]
It is enough to show that $n$ is the only divisor of~$n$ belonging to~$I_{n,i}$. Since any other divisor of~$n$ is $\leq n/2$,
it suffices to check that $n \leq i + \sqrt{2n+ i^2}$ and $n/2 \leq (i + \sqrt{2n+ i^2})/2$; the latter inequality  is equivalent to the former.
They are both equivalent to $n(n-2i) = (n - i)^2 - i^2 \leq 2n$, which simplifies into $n-2i \leq 2$ hence to $i \geq (n-2)/2= n/2 - 1$.
\end{proof}

\begin{proof}[Proof of Theorem~\ref{th-approx}]
By \eqref{def-F2} and~\eqref{PPTT} we have
\begin{eqnarray*}
\Pg_n(X) - F_{n-1}(X) 
& = & (a_{n,0} - 1)  + \sum_{1 \leq i \leq n - 1} \, (a_{n,i} -1) \, \TT_i(X) \\
& = & (a_{n,0} - 1)  + \sum_{1 \leq i < n/2 - 1} \, (a_{n,i} -1) \, \TT_i(X) ,
\end{eqnarray*}
the last equality being a consequence of Lemma~\ref{lem-ani}.
Since $\TT_i(X)$ is of degree~$i$, we obtain the desired result.
\end{proof}

\subsection{Odd divisors}\label{ssec-oddP}

We now give a complete formula for $\Pg_n(X)$ as a linear combination of the polynomials~$F_k(X)$, each appearance of~$F_k(X)$
being parameterized by an odd divisor of~$n$. 

Recall that if we express $n$ ($\geq 1$) as the product
$n = 2^a p_1^{a_1} p_2^{a_2} \cdots p_r^{a_r}$,
where $p_1, p_2, \ldots, p_r$ are distinct odd primes and $a, a_1, a_2, \ldots, a_r$ are nonnegative integers, 
then the number of \emph{odd divisors} of~$n$ (including~$1$) is equal to
\[
(a_1 + 1)(a_2 + 1) \cdots (a_r + 1) \geq 1.
\]
This number is a multiplicative function of~$n$ (see \cite[A001227]{OEIS}).

Observe that $(a_1 + 1)(a_2 + 1) \cdots (a_r + 1) = 1$ if and only if $n$ is a power of~$2$; 
otherwise, the number of odd divisors of~$n$ is at least~$2$.

For $n\geq 1$ and any odd divisor~$d$ of~$n$, define the integer 
\begin{equation}\label{eq-rnd}
r_n(d)  = \frac{n}{d} - \frac{d+1}{2} \, .
\end{equation}
When $d$ is the trivial divisor~$1$ we have $r_n(1) = n - 1$. 

We now state our main result for $\Pg_n(X)$.

\begin{theorem}\label{th-PFodd}
For all $n\geq 1$, 
\[
\Pg_n(X) = \sum_{d | n, \, d \, \mathrm{odd} \atop r_n(d) \geq 0} \, F_{r_n(d)}(X) 
- \sum_{d | n, \, d \, \mathrm{odd} \atop r_n(d) <0} \, F_{-r_n(d) -1}(X) .
\]
\end{theorem}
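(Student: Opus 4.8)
The plan is to deduce Theorem~\ref{th-PFodd} from the new expression~\eqref{formula-Cnodd} for $C_n(q)$, exactly as the introduction promises (``In Section~\ref{pf-th-PFodd} we use the results of Section~\ref{sec-Cnq} to prove Theorem~\ref{th-PFodd}''). So the first step is really to prove Theorem~\ref{th-Cn}, i.e.\ Formula~\eqref{formula-Cnodd}. For that I would start from the product formula underlying~\eqref{gf-Pgn}, namely
\[
1 + (X-2) \sum_{n\geq 1} \Pg_n(X)\, t^n = \prod_{i\geq 1} \frac{(1-t^i)^2}{1 - X t^i + t^{2i}} \, ,
\]
substitute $X = q + q^{-1}$, and take the logarithmic derivative (with respect to $t$). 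The right-hand side becomes a sum over $i$, and after expanding each $\log(1-t^i)$ and $\log(1 - (q+q^{-1})t^i + t^{2i}) = \log(1-qt^i) + \log(1-q^{-1}t^i)$ into power series, one collects the coefficient of $t^n$. The key combinatorial identity to extract is that the "primitive" contribution at exponent $n$ involves summing $q^j + q^{-j} - q^{j+1}\cdots$ type terms over factorizations $n = d\cdot m$; the appearance of odd $d$ comes from the standard fact that $\sum_{d \mid n} \mu(\text{something})$ collapses the even part, or equivalently that $\prod_i (1-t^i)^2$ and the Chebyshev denominator conspire so that only odd-index terms survive. I expect this bookkeeping—turning the Euler-product logarithmic derivative into the clean four-term sum indexed by odd divisors—to be the main obstacle; it is the heart of Theorem~\ref{th-Cn} and everything downstream is formal.

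Granting~\eqref{formula-Cnodd}, the passage to $\Pg_n(X)$ is the second step and is essentially algebraic manipulation. Since $C_n(q) = (q-1)^2 P_n(q)$ and $\Pg_n(q+q^{-1}) = P_n(q)/q^{n-1}$, we have
\[
\Pg_n(q+q^{-1}) = \frac{C_n(q)}{q^{n-1}(q-1)^2} = \frac{C_n(q)}{q^n (q + q^{-1} - 2)} \, ,
\]
using $(q-1)^2 = q(q + q^{-1} - 2)$. Plugging in~\eqref{formula-Cnodd} cancels the $q^n$, leaving
\[
(X - 2)\, \Pg_n(X) = \sum_{d\mid n,\ d\ \mathrm{odd}} \left( q^{r_n(d)+\frac12} + q^{-r_n(d)-\frac12} - q^{r_n(d)-\frac12} - q^{-r_n(d)+\frac12} \right)
\]
with $X = q + q^{-1}$ and $r_n(d) = n/d - (d+1)/2$ (note $n/d - (d-1)/2 = r_n(d) + 1$, and the half-integer exponents reorganize to show each summand is a genuine Laurent polynomial symmetric under $q \mapsto q^{-1}$, hence a polynomial in $X$). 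So the third step is to recognize the right-hand side divided by $X-2$ as the stated combination of $F_k(X)$'s.

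For that final recognition step I would use the generating function of Proposition~\ref{prop-gfF}. From $\sum_{k\geq 0} F_k(X) t^k = (1+t)/(1 - Xt + t^2)$ one gets, writing $X = q + q^{-1}$ so that $1 - Xt + t^2 = (1-qt)(1-q^{-1}t)$, a partial-fraction expression yielding a closed form $F_k(q+q^{-1}) = \big(q^{k+1}(q+1) - q^{-(k+1)}(q^{-1}+1)\big)/\big(q(q-1)^2/\dots\big)$—more cleanly, $(X-2) F_k(X) = q^{k+1} + q^k - 2 \cdot(\text{sym}) + q^{-k} + q^{-k-1}$ type expression; the cleanest route is: $(X-2) F_k(X) = (q + q^{-1} - 2)\cdot F_k(q+q^{-1})$, and since $q + q^{-1} - 2 = -(1-qt)(1-q^{-1}t)/t\big|_{t=1}$ is awkward, instead multiply the generating function identity by $(1-t)$ to get $\sum (F_k - F_{k-1}) t^k = (1+t)(1-t)/((1-qt)(1-q^{-1}t))$, i.e.\ $\TT_k(X) + [k=0] = \dots$, recovering~\eqref{def-F}; then use $\sum_k \TT_k(X) t^k = (2-Xt)/(1-Xt+t^2)$ and the telescoping~\eqref{def-F2} directly. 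Concretely, I would just verify that $(X-2)\big(F_a(X) - F_b(X)\big)$, for $a \geq b \geq 0$, equals $q^{a+1} + q^{-a-1} - q^{b+1} - q^{-b-1} - (q^a + q^{-a}) + (q^b + q^{-b})$ or a similar telescoped Chebyshev difference, match this term-by-term against the four-term summand above after handling the sign convention (the case $r_n(d) \geq 0$ versus $r_n(d) < 0$ corresponds precisely to whether the exponent $n/d - (d+1)/2$ is $\geq 0$, and the identity $F_{-r-1}$ appearing with a minus sign reflects $F_{-1}(X) = 0$ together with the symmetry $q \leftrightarrow q^{-1}$ applied to a negative exponent). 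Summing over the odd divisors $d$ then gives exactly the claimed formula. I would close by noting the $d=1$ term contributes $F_{n-1}(X)$ since $r_n(1) = n-1 \geq 0$, consistent with Theorem~\ref{th-approx}.
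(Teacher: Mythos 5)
Your overall architecture is the right one and matches the paper's: establish the odd-divisor formula for $C_n(q)$ (Theorem~\ref{th-Cn}\,(ii)), divide by $q^n(X-2)$, and recognize each four-term summand as $(X-2)$ times a value of some $F_r$. Your steps 2 and 3, once the algebra is cleaned up, are essentially the paper's Lemma~\ref{lem-CF} (namely $q^{r+1}+q^{-r-1}-q^{r}-q^{-r}=q^{-1}(q-1)^2F_r(q+q^{-1})$, equivalently $\TT_{r+1}(X)-\TT_r(X)=(X-2)F_r(X)$, which follows from the recurrence of Proposition~\ref{prop-recF}) together with the case split on the sign of $r_n(d)$. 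Two local slips there: the summand has integer exponents $q^{r_n(d)+1}+q^{-r_n(d)-1}-q^{r_n(d)}-q^{-r_n(d)}$, not the half-integer exponents you wrote (note $n/d-(d-1)/2=r_n(d)+1$ exactly, since $d$ is odd); and the minus sign in the $r_n(d)<0$ case comes simply from the antisymmetry of $q^{r+1}+q^{-r-1}-q^r-q^{-r}$ under $r\mapsto -r-1$, not from any convention like $F_{-1}=0$.

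The genuine gap is your step 1. You explicitly defer the proof of Theorem~\ref{th-Cn} to unexecuted ``bookkeeping'' on the logarithmic derivative of the infinite product \eqref{gf-Pgn}, and you yourself call this ``the heart'' of the argument. That route is not only incomplete but doubtful: the logarithm of the product gives the coefficient of $t^n$ as $\sum_{m\mid n}\frac{1}{m}(q^m+q^{-m}-2)$, a sum over \emph{all} divisors, and recovering $C_n(q)$ itself requires exponentiating (or unwinding a convolution from $tF'/F$), which does not visibly collapse to a four-term sum over \emph{odd} divisors. The paper does something quite different: it starts from the already-known explicit coefficients $c_{n,i}$ of $C_n(q)$ from \cite[Th.~1.1]{KR3} (formulas \eqref{formula-cn0}--\eqref{formula-cni}, in terms of representations $n=k(k+2i\pm1)/2$), reformulates them as a sum over representations of $n$ by increasing sequences of consecutive integers (Lemma~\ref{formula1}), and then invokes the classical bijection between odd divisors of $n$ and such representations (the pairing $\IS(a,h)\leftrightarrow\IS(\check a,\check h)$ of \S~\ref{ssec-IS}) to group the terms by odd divisor. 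Without either carrying out your analytic bookkeeping or supplying this combinatorial input, the proposal does not yet prove the theorem.
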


The number of terms~$F_r(X)$ in the previous formula is clearly equal to the number of odd divisors of~$n$.
The proof of Theorem~\ref{th-PFodd} will be given in Section~\ref{pf-th-PFodd}.

Since $d= 1$ is an odd divisor for any~$n\geq 1$ and $r_n(1) = n - 1 \geq 0$, the polynomial $F_{n-1}(X)$ always
appears with a positive sign in the previous formula for~$\Pg_n(X)$. 

We remarked above that only powers of~$2$ have a unique odd divisor, namely $d=1$.
This implies the following consequence of~Theorem~\ref{th-PFodd}, which explains the coincidences
$\Pg_n(N) = F_{n-1}(N)$  (set in boldface) observed in Table~\ref{table-values} for $N = 3$, $4$ and $5$.

\begin{coro}
We have $\Pg_n(X) = F_{n-1}(X)$ if and only if $n$ is a power of~$2$.
\end{coro}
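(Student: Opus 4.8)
The plan is to deduce this directly from Theorem~\ref{th-PFodd} together with one elementary parity observation. For the ``if'' direction: when $n = 2^a$ the only odd divisor of $n$ is $d = 1$, for which $r_n(1) = n - 1 \geq 0$, so the right-hand side of the formula in Theorem~\ref{th-PFodd} collapses to the single term $F_{n-1}(X)$, giving $\Pg_n(X) = F_{n-1}(X)$.

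For the converse I would argue the contrapositive: assuming $n$ is not a power of~$2$, I would show $\Pg_n(X) \neq F_{n-1}(X)$. By Theorem~\ref{th-PFodd}, isolating the term $d = 1$ (which contributes exactly $F_{n-1}(X)$ since $r_n(1) = n-1 \geq 0$), we have $\Pg_n(X) - F_{n-1}(X) = A(X) - B(X)$, where $A(X) = \sum F_{r_n(d)}(X)$ runs over the odd divisors $d > 1$ of~$n$ with $r_n(d) \geq 0$, and $B(X) = \sum F_{-r_n(d)-1}(X)$ runs over the odd divisors $d$ of~$n$ with $r_n(d) < 0$. The first key point is that a nonempty sum of polynomials $F_k(X)$ (repetitions allowed, $k \geq 0$) is never zero: since each $F_k(X)$ is monic of degree~$k$ by~\eqref{def-F2}, the coefficient of $X^K$, where $K$ is the largest index occurring, is a positive integer, so such a sum is a polynomial of degree exactly~$K$. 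The second key point is that $r_n(d) = n/d - (d+1)/2$ is strictly decreasing in~$d$, hence injective on the set of divisors of~$n$; consequently, whenever the corresponding sum is nonempty, $\deg A(X) = r_n(d_+)$ where $d_+$ is the smallest odd divisor $>1$ of~$n$ with $r_n(d) \geq 0$, and $\deg B(X) = -r_n(d_-) - 1$ where $d_-$ is the largest odd divisor of~$n$ with $r_n(d) < 0$.

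Since $n$ is not a power of~$2$ it has an odd divisor $>1$, so at least one of $A(X)$, $B(X)$ is a nonempty sum. If exactly one of them is nonempty, then $A(X) - B(X)$ is, up to sign, a nonzero polynomial and we are done. If both are nonempty, it remains to check $\deg A(X) \neq \deg B(X)$: then $A(X) - B(X)$ has degree $\max(\deg A, \deg B) \geq 0$ with positive leading coefficient, hence is nonzero. Here the parity observation enters: an equality $r_n(d_+) = -r_n(d_-) - 1$ would rearrange to $r_n(d_+) + r_n(d_-) + 1 = 0$, i.e.\ $n(d_+ + d_-)/(d_+ d_-) = (d_+ + d_-)/2$, hence $d_+ d_- = 2n$; but $d_+$ and $d_-$ are odd, so $d_+ d_-$ is odd and cannot equal the even number~$2n$, a contradiction. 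This completes the converse, and with it the corollary.

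I expect the only genuine content to be the parity step that $d_+ d_- = 2n$ is impossible; everything else is bookkeeping with Theorem~\ref{th-PFodd} and the monicity of the $F_k(X)$. The mild point to handle with care is the degenerate case where $A(X)$ or $B(X)$ is empty, which is precisely why I record first that ``$n$ not a power of~$2$'' forces the existence of an odd divisor $>1$ before splitting into cases.
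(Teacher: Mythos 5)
Your proof is correct and follows essentially the same route as the paper: the corollary is read off from Theorem~\ref{th-PFodd} together with the fact that the powers of~$2$ are exactly the integers with a unique odd divisor. The paper treats the converse as immediate (the non-cancellation of the extra terms $\pm F_{r}(X)$ is implicitly guaranteed by the distinctness of monomials asserted in Theorem~\ref{th-Cn}\,(ii)), whereas you justify it explicitly via the injectivity of $d \mapsto r_n(d)$ and the parity argument ruling out $d_+d_-=2n$; this is a welcome filling-in of detail, with only the cosmetic slip that when $\deg B > \deg A$ the leading coefficient of $A(X)-B(X)$ is $-1$ rather than positive, which does not affect the conclusion that it is nonzero.
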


As special cases of Theorem~\ref{th-PFodd} we obtain the following formulas for~$\Pg_n(X)$.

\subsubsection{Two odd divisors}

If $n = p$ is an odd prime, then it has two odd divisors, namely $1$ and $p$, so that
the formula for~$\Pg_p(X)$ has two summands. We have
\begin{equation*}
\Pg_p(X) = F_{p-1}(X) - F_{(p-3)/2}(X) = \sum_{m=(p-1)/2}^{p-1} \, \TT_m(X) .
\end{equation*}

Now consider the case $n = 2^a p$, where $a \geq 1$ and $p$ is an odd prime. 
Such an integer~$n$ has also only two odd divisors. 

If $r_n(p) = 2^a - (p+1)/2 \geq 0$, which is equivalent to $p \leq 2^{a+1} - 1$, then 
\begin{equation*}
\Pg_{2^a p}(X) = F_{2^a p-1}(X) + F_{2^a - (p+1)/2}(X)  .
\end{equation*}
For instance, $\Pg_{6}(X) = F_{5}(X) + F_{0}(X)$ and $\Pg_{12}(X) = F_{11}(X) + F_{2}(X)$

If $r_n(p) = 2^a - (p+1)/2 < 0$, which is equivalent to $p > 2^{a+1} - 1$, then 
\begin{equation*}
\Pg_{2^a p}(X) = F_{2^a p-1}(X) - F_{(p-1)/2 - 2^a}(X)  .
\end{equation*}
As special cases we have $\Pg_{10}(X) = F_{9}(X) - F_{0}(X)$ and $\Pg_{14}(X) = F_{13}(X) - F_{1}(X)$.

\subsubsection{Three odd divisors}

If $n = p^2$ for some odd prime~$p$, then
\begin{equation*}
\Pg_{p^2}(X) = F_{p^2-1}(X) - F_{(p^2 - 3)/2}(X) + F_{(p -1)/2}(X) .
\end{equation*}

\subsubsection{Four odd divisors}

If $n = p^3$ for some odd prime~$p$, then it has four odd divisors and we have
\begin{equation*}
\Pg_{p^3}(X) = F_{p^3-1}(X) - F_{(p^3 - 3)/2}(X) + F_{(2p^2 - p - 1)/2}(X) - F_{(p^2 -2p -1)/2}(X) .
\end{equation*}

Similarly, $n = 3p$ for some odd prime~$p \geq 7$ has four odd divisors, namely $1, 3, p, 3p$. Then
\begin{equation*}
\Pg_{3p}(X) = F_{3p-1}(X) - F_{3(p-1)/2} + F_{p-2}(X) - F_{(p-7)/2}  .
\end{equation*}
Belonging to this case are 
$\Pg_{15}(X) = F_{14}(X) - F_6(X) + F_3(X) + F_0(X)$
and $\Pg_{21}(X) = F_{20}(X) - F_9(X) + F_5(X) - F_0(X)$.

In Table~\ref{table-PPTTF} we have expressed each polynomial~$\Pg_n(X)$ ($1 \leq n \leq 16$) 
as a sum of the polynomials~$\TT_k(X)$ and as a linear combination of the polynomials~$F_k(X)$.

\begin{table}[ht]
\caption{\emph{Expressions for $\Pg_n (X)$ in terms of $\TT_k(X)$ and of $F_k(X)$}}\label{table-PPTTF}
\renewcommand\arraystretch{1.30}
\noindent\[
\begin{array}{|c||c|c|}
\hline
n &  \textrm{As sums of} \, \TT_k(X) & \textrm{In terms of}\, F_k(X) \\
\hline\hline
1 & 1 & F_0 \\ 
\hline
2 &  1 + \TT_1 & F_1 \\
\hline
3 &  \TT_1 + \TT_2 & F_2 - F_0 \\ 
\hline
4 &  1 + \TT_1 + \TT_2 + \TT_3 & F_3 \\ 
\hline
5 &  \TT_2 + \TT_3 + \TT_4 & F_4 - F_1 \\ 
\hline
6 &  \TT_0 + \TT_1 + \TT_2 + \TT_3 + \TT_4 + \TT_5  & F_5 + F_0 \\
\hline
7 &  \TT_3 + \TT_4 + \TT_5 + \TT_6 & F_6 - F_2 \\ 
\hline
8 &  1 + \TT_1 + \TT_2 + \TT_3 + \TT_4 + \TT_5 + \TT_6 + \TT_7  & F_7 \\ 
\hline
9 &  1 + \TT_1 + \TT_4 + \TT_5 + \TT_6 + \TT_7 + \TT_8 & F_8 - F_3 + F_1 \\
\hline
10 &  \TT_1 + \TT_2 + \TT_3 + \TT_4 + \TT_5 + \TT_6 + \TT_7  + \TT_8 + \TT_9 & F_9 - F_0 \\
\hline
11 &  \TT_5 + \TT_6 + \TT_7 + \TT_8 + \TT_9 + \TT_{10} & F_{10} - F_4  \\
\hline
&  \TT_0 + 2 \TT_1 + 2\TT_2 + \TT_3 + \TT_4 + \TT_5 & \\
12 &  + \TT_6 + \TT_7 + \TT_8 + \TT_9 + \TT_{10} +  \TT_{11} &  F_{11} + F_2 \\
\hline
13 &   \TT_6 + \TT_7 + \TT_8 + \TT_9 + \TT_{10} +  \TT_{11} +  \TT_{12} &  F_{12} - F_5 \\
\hline
&  \TT_2 + \TT_3 + \TT_4 + \TT_5 +  \TT_6 + \TT_7& \\
14 &  + \TT_8 + \TT_9 + \TT_{10} +  \TT_{11} +  \TT_{12} +  \TT_{13} &  F_{13} - F_1 \\
\hline
&  \TT_0 + \TT_1 + \TT_2 + \TT_3  + \TT_7 + \TT_8 & F_{14}(X) - F_6(X) \\
15 &   + \TT_9 + \TT_{10} +  \TT_{11} +  \TT_{12} +  \TT_{13} +  \TT_{14} &   {}+ F_3(X) + F_0(X)  \\
\hline
&  1 + \TT_1 + \TT_2 + \TT_3 + \TT_4 + \TT_5 +  \TT_6 + \TT_7 & \\
16 &  + \TT_8 + \TT_9 + \TT_{10} +  \TT_{11}  +  \TT_{12}  +  \TT_{13}  +  \TT_{14}  +  \TT_{15} &  F_{15}  \\
\hline
\end{array}
\]
\end{table}

\begin{rems}
(a) In Table~\ref{table-values} we set in italics the values $\Pg_n(N)$ and $F_{n-1}(N)$ which differ by~$1$, namely for $n = 3$, $6$
and $10$. These quasi-coincidences between values can be explained as follows.

We have
\[
\Pg_n(X) = F_{n-1}(X) + 1 = F_{n-1}(X) + F_0(X) 
\]
if and only if $n$ is an even \emph{perfect} number; equivalently,
$n = 2^a p$, where $p = 2^{a+1} - 1$ is a \emph{Mersenne prime} such as $3, 7, 31, 127$
(see \cite[A000668]{OEIS}). The lowest $n$ for which it holds are $n = 6, 28, 496, 8128$.

Similarly, 
\[
\Pg_n(X) = F_{n-1}(X)  - 1 = F_{n-1}(X) - F_0(X) 
\]
if and only if $n = 2^a p$, where $p = 2^{a+1} + 1$ is a \emph{Fermat prime} such as $3, 5, 17, 257$
(see \cite[A000215]{OEIS}). This works e. g. for $n = 3$, $10$, $136$, $32896$.

(b) In general, the formula of Theorem~\ref{th-PFodd} for $\Pg_n(X)$ contains $\pm F_0(X)$ as a summand if and only if 
$n$ is \emph{triangular}, i.e., $n = r(r+1)/2$ for some $r\geq 1$.
The lowest triangular integers are $1, 3, 6, 10, 15, 21, 28, 36, 45, 55, 66, 78, 91$.

(c) We may also wonder for which $n$ we have 
$\Pg_n(X) = F_{n-1}(X) \pm F_1(X)$.

We have $\Pg_n(X) = F_{n-1}(X) + F_1(X)$ if and only if $n = 2^a p$, where $p$ is a prime of the form $p = 2^{a+1} - 3$.
A list of such primes appear in \cite[A050415]{OEIS} and the corresponding~$n$ such as $n= 20, 104, 464, 1952$
in \cite[A181703]{OEIS}.

Similarly, $\Pg_n(X) = F_{n-1}(X) - F_1(X)$ if and only if $n = 2^a p$, where $p$ is a prime of the form $p = 2^{a+1} + 3$.
Primes $p = 5, 7, 11, 19$ are of this form (see~\cite[A057733]{OEIS}); they correspond to $n= 5, 14, 44, 152$.

More generally, $F_1(X)$ appears in the formula of Theorem~\ref{th-PFodd} for $\Pg_n(X)$ if and only if
$n = r(r+3)/2$ for some $r\geq 1$; see \cite[A000096]{OEIS} for a list of such~$n$.
\end{rems}

\begin{rem}
The sequence of integers $\Pg_n(3)$ is Sequence A329156 of~\cite{OEIS}.
The sequence $F_{n-1}(3)$ is A002878 in~\cite{OEIS}:
we have $F_{n-1}(3) = L(2n+1)$, where $L(n)$ is the Lucas sequence defined by $L(n) = L(n-1) + L(n-2)$, 
$L(0) = 2$ and $L(1) = 1$.

The sequence~$\Pg_n(4)$ is A386706.
According to~\cite[A001834]{OEIS}, the sequence $F_{n-1}(4)$ consist of the numerators of 
the lower principal convergents to~$\sqrt{3}$ in the sense of~\cite{Ki}.

The sequence~$\Pg_n(5)$ is A387017 and the sequence~$F_{n-1}(5)$ is A030221 in~\cite{OEIS}.
The latter consists of the values of Chebyshev even-indexed $U$-polynomials at~$\sqrt{7}/2$. 
\end{rem}

\section{Formulas for $C_n(q)$}\label{sec-Cnq}

In order to prove Theorem~\ref{th-PFodd} we need new expressions for the polynomial~$C_n(q)$.
Recall that $C_n(q)$ and $\Pg_n(X)$ are related by
\begin{equation*}
\frac{C_n(q)}{q^n} = \frac{(q-1)^2}{q} \, \frac{P_n(q)}{q^{n-1}} = \frac{(q-1)^2}{q} \, \Pg_n(q + q^{-1}) .
\end{equation*}
This section can be considered as an addendum to \cite{KR3},
where we computed the coefficients of~$C_n(q)$. 

\subsection{Increasing sequences}\label{ssec-IS}

The following material is taken from~\cite{Ma}.

We call \emph{increasing sequence} any finite nonempty ordered set of consecutive integers $\{ a+1, \ldots, a+h\}$ 
with smallest element~$a+1$ and largest element~$a+h$.
Here $a$ is any integer (positive, negative or zero) and $h$ is a positive integer. We denote this set by~$\IS(a,h)$;
it has $h$ elements. An increasing sequence~$\IS(a,h)$ is called \emph{positive}  if $a\geq 0$
and \emph{negative} if $a < 0$.
It is called \emph{odd} if $h$ is odd, and \emph{even} if $h$ is even.

We say that an integer~$n \geq 1$ is represented by~$\IS(a,h)$, and write $\IS(a,h) \models n$, if $n$ is equal to the sum
of all elements of~$\IS(a,h)$.

There is an involution $\IS(a,h) \mapsto \IS(\check{a},\check{h})$ on increasing sequences determined by
\begin{equation}\label{def-invIS}
a + \check{a} +1 = 0 \quad \text{and} \quad \check{a} + \check{h} = a + h.
\end{equation}
If $\IS(a,h)$ represents~$n$, so does~$\IS(\check{a},\check{h})$. 
Since $\check{h} - h = a - \check{a} = 2a + 1$, we see that $h$ and $\check{h}$ are of opposite parity.
In other words, $\IS(a,h)$ is even (resp. odd) if and only if $\IS(\check{a},\check{h})$ is odd (resp. even).
Moreover, if $\IS(a,h)$ is positive (resp. negative), then $\IS(\check{a},\check{h})$ is negative (resp. positive).

Let $n \geq 1$ be a positive integer.
To any \emph{odd} divisor~$d$ of~$n$ we associate the odd increasing sequence $\IS(a,h)$, where 
\begin{equation}\label{eq-ah}
a = \frac{n}{d} - \frac{d+1}{2}
\quad\text{and}\quad 
h = d;
\end{equation}
it is centered around~$n/d$:
\begin{equation}\label{def-ISd}
\IS(a,h) = \left\{ \frac{n}{d} - \frac{d-1}{2}, \ldots, \frac{n}{d} -1, \frac{n}{d}, \frac{n}{d} +1, \ldots, \frac{n}{d} + \frac{d-1}{2} \right\}.
\end{equation}
Depending on~$d$, this increasing sequence may be positive or negative. We have $\IS(a,h) \models n$.

From $a = n/d - (d+1)/2$ and $h = d$ we derive 
\begin{equation}\label{eq-ah2}
\check{a} = -\frac{n}{d} + \frac{d-1}{2}
\quad\text{and}\quad
\check{h} = \frac{2n}{d}.
\end{equation}
We thus obtain the even increasing sequence
\begin{equation}\label{def-ISdv}
\IS(\check{a},\check{h}) 
= \left\{ -\frac{n}{d} + \frac{d+1}{2}, \ldots, \frac{n}{d} + \frac{d-1}{2} \right\} \models n .
\end{equation}

Clearly, if $\IS(a,h)$ is positive, then $\IS(a,h) \subset \IS(\check{a},\check{h})$ and
\begin{equation}\label{def-ISdv1}
\IS(\check{a},\check{h}) \setminus \IS(a,h)
= \left\{ -\frac{n}{d} + \frac{d+1}{2}, \ldots, \frac{n}{d} - \frac{d+1}{2} \right\}  .
\end{equation}
If $\IS(a,h)$ is negative, then $\IS(\check{a},\check{h}) \subset \IS(a,h)$ and
\begin{equation}\label{def-ISdv2}
 \IS(a,h) \setminus \IS(\check{a},\check{h})
= \left\{ \frac{n}{d} - \frac{d-1}{2}, \ldots, -\frac{n}{d} + \frac{d-1}{2} \right\}  .
\end{equation}

For both pairs $(a,h)$ and $(\check{a}, \check{h})$ we have the same relation, namely
\begin{equation}\label{eq-IS}
\frac{h(h+2a+1)}{2} = n
= \frac{\check{h}(\check{h}+2\check{a}+1)}{2}  .
\end{equation}

Conversely, starting from an increasing sequence $\IS(a,h)$ we obtain an odd divisor $d$ of~$n$ as follows: 
$d$ is equal to the only element of~$\{h, \check{h}\}$ which is odd.
In this way we have a bijection between the set of odd divisors of~$n$ and 
the set of positive increasing sequences representing~$n$,
which is in bijection with the set of negative increasing sequences representing~$n$.

Suppose that given $n\geq 1$ we have a pair $(a,h)$ of integers with $h\geq 1$ verifying Equation~\eqref{eq-IS},
which we rewrite as $h(h+2a+1) = 2n$.
If $h$ is odd, hence coprime to~$2$, then $d = h$ is an odd divisor of~$n$ and $a = n/d - (d+1)/2$, as in~\eqref{eq-ah}.
If $h$ is even, then $d = h+2a+1$ is odd; being a divisor of~$2n$, it is an odd divisor of~$n$; in this case
$h = 2n/d$ and $a = - n/d + (d-1)/2$, as in~\eqref{eq-ah2}.

\begin{exas}
(i) If $n$ is a power of~$2$, then it has only one odd divisor, namely $d=1$; the corresponding positive increasing sequence
is $\IS(n-1,1) = \{n\}$ and the corresponding negative one is 
\[
\IS(-n,2n) = \{ -(n-1), \ldots, -1,0,1, \ldots, n-1,n\}.
\]

(ii) Any other integer~$n$ has odd divisors $d\neq 1$, hence more than one positive (or negative) increasing sequences.

Take for instance $n= 18$. If $d = 3$, then $a = 4$, $h=3$, $\check{a} = -5$, $\check{h} = 12$ and 
\[
\IS(4,3) = \{ 5,6,7\} \quad \text{and}\quad  
\IS(-5, 12) = \{-4, -3, \ldots, 3, 4, 5, 6, 7\} .
\]

If $d = 9$, then $a = -3$, $h=9$, $\check{a} = 2$, $\check{h} = 4$ and 
\[
\IS(-3,9) = \{ -2, -1, 0, 1, 2, 3, 4, 5,6 \} \quad \text{and}\quad  
\IS(2,4) = \{ 3, 4, 5, 6\} .
\]
\end{exas}

\subsection{A formula for~$C_n(q)$ in terms of odd divisors of~$n$}\label{ssec-Cnq}

In \cite[Th.~1.1]{KR3} we gave the following explicit expression for~$C_n(q)$:
\begin{equation*}\label{Cn-coeffi}
\frac{C_n(q)}{q^n} = c_{n,0}  + \sum_{i=1}^n \, c_{n,i} \, \left( q^{i} + q^{-i} \right) , 
\end{equation*}
where
\begin{equation}\label{formula-cn0}
c_{n,0} = 2 (-1)^r
\end{equation}
if $n = r(r+1)/2$ for some positive integer~$r$ and $c_{n,0} = 0$ otherwise. 
For the remaining coefficients $c_{n,i}$ ($i\geq 1$) we have
\begin{equation}\label{formula-cni}
c_{n,i} =
\left\{
\begin{array}{cl}
(-1)^k & \text{if}\;\, n = k(k+2i +1)/2 \;\; \text{for some integer}\; k  \geq 1, \\
\noalign{\smallskip}
(-1)^{k-1} & \text{if}\;\, n = k(k+2i -1)/2 \;\; \text{for some integer}\; k \geq 1, \\
\noalign{\smallskip}
0 & \text{otherwise.} 
\end{array}
\right.
\end{equation}

Observe that in (\ref{formula-cni}) the first two conditions are mutually exclusive, 
as implied by~\cite[Lemma~3.5 (ii)]{KR3}. This may also be verified as follows: suppose
\[
k(k+2i+1)=h(h+2i-1)
\] 
with $i,h,k\geq 1$; then $k^2+2ik+k=h^2+2ih-h$, hence 
\[
(k-h+1)(k+h)+2i(k-h)=0 .
\] 
It is readily seen that the left-hand side cannot vanish, neither in the case $k\geq h$, nor in the case~$k<h$.

\begin{lemma}\label{formula1} 
We have
\[
\frac{C_n(q)}{q^n} = \sum\,  (-1)^h \, \left( q^a + q^{-a} \right), 
\]
where the summation is over all $h\in\ZZ_{\geq 1}$ and $a\in \ZZ$ such that $n=h(h+1+2a)/2$. 
Moreover, if $n$ has such a representation and $|a|$ is given, then $a,h$ are unique.
\end{lemma}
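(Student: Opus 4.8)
The plan is to prove the identity by expanding both sides as Laurent polynomials in~$q$, symmetric under $q \mapsto q^{-1}$, and comparing the coefficient of~$q^i$ for each $i \geq 0$. On one side we use the known expansion
\[
\frac{C_n(q)}{q^n} = c_{n,0} + \sum_{i=1}^n c_{n,i}\,(q^i + q^{-i})
\]
from~\cite[Th.~1.1]{KR3}, with $c_{n,0}$ and $c_{n,i}$ as in~\eqref{formula-cn0} and~\eqref{formula-cni}. On the other side, since the summand $(-1)^h(q^a + q^{-a})$ depends on~$a$ only through~$|a|$, we sort the pairs $(h,a)$ with $n = h(h+1+2a)/2$ by the value $i := |a|$, and we must show that the total contribution of the pairs with $|a| = i$ to the coefficient of~$q^i$ equals $c_{n,i}$ for $i \geq 1$, and equals $c_{n,0}$ for $i = 0$ (bearing in mind that $q^0 + q^{-0} = 2$). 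Comparing the coefficients of $q^0, q^1, q^2, \ldots$ is then enough.

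For $i = 0$ a pair has $a = 0$, hence $h(h+1) = 2n$; since $h \mapsto h(h+1)$ is strictly increasing on positive integers, such an~$h$ exists — and is then unique — exactly when $n = r(r+1)/2$ for some $r \geq 1$, in which case $h = r$ contributes $2(-1)^r = c_{n,0}$. For $i \geq 1$ a pair with $|a| = i$ has $a = i$ or $a = -i$. If $a = i$, the condition $n = h(h+2i+1)/2$ with $h \geq 1$ is precisely the first alternative of~\eqref{formula-cni} with $k = h$, and the contribution is $(-1)^h = (-1)^k$. If $a = -i$, then $n = h(h+1-2i)/2 > 0$ with $h \geq 1$ forces $h \geq 2i$; putting $k = h - 2i + 1 \geq 1$ turns the condition into $n = k(k+2i-1)/2$, the second alternative of~\eqref{formula-cni}, and the contribution is $(-1)^h = (-1)^{k+2i-1} = (-1)^{k-1}$. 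In each case the relevant map ($h \mapsto h(h+2i+1)/2$, respectively $k \mapsto k(k+2i-1)/2$) is strictly increasing on positive integers, so each alternative is met by at most one value.

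It then remains to invoke the fact, established in the paragraph following~\eqref{formula-cni}, that for a fixed $i \geq 1$ the two alternatives of~\eqref{formula-cni} cannot hold simultaneously. Consequently at most one pair $(h,a)$ has $|a| = i$; its sign is exactly the one prescribed for $c_{n,i}$, and if no such pair exists then $c_{n,i} = 0$ as well. Summing over~$i$ yields the identity, and the ``moreover'' clause — uniqueness of $(h,a)$ once $|a|$ is fixed — is the statement just proved. There is no serious obstacle here, the lemma being essentially a repackaging of~\cite{KR3}; the only points requiring care are the reindexing in the case $a = -i$ (checking that $h \geq 2i$ is equivalent to $k \geq 1$ and that $(-1)^h$ collapses to $(-1)^{k-1}$) and the extra factor~$2$ in the coefficient bookkeeping for $i = 0$.
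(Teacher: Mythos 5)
Your proof is correct and follows essentially the same route as the paper's: both match the pairs $(h,a)$ with the two alternatives of~\eqref{formula-cni} via the reindexings $k=h$ (for $a=i$) and $k=h-2i+1$ (for $a=-i$), check the signs, treat the constant term via~\eqref{formula-cn0}, and invoke the mutual exclusivity of the two alternatives established just before the lemma. Your explicit remark that each map $h\mapsto h(h+2i\pm 1)/2$ is strictly increasing is a slightly more careful justification of the uniqueness clause, but the argument is the same.
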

 
\begin{proof}
From~\eqref{formula-cn0} it follows that both sides have the same constant term. 

From now on we concentrate on the case when $a$ is a nonzero integer.
Let $i\geq 1$.
Suppose that $n=k(k+2i +1)/2$ with $k\geq 1$ as in the first row of~\eqref{formula-cni}; 
then $n=h(h+2a +1)/2$ with $h=k \geq 1$, $a=i\in \ZZ \setminus \{0\}$, and $(-1)^k=(-1)^h$. 

Now suppose that $n = k(k+2i -1)/2$ with $k\geq 1$ as in the second row of~\eqref{formula-cni}; 
then $n=h(h+2a +1)/2$ with $h=k+2i-1\geq 1$, $a=-i\in \ZZ \setminus \{0\}$, and $(-1)^{k-1}=(-1)^h$. 

Conversely, let $n=h(h+2a +1)/2$ with $h\geq 1$, $i\in \ZZ$ and $a\neq 0$. 
Then either $a\geq 1$, and then $n=k(k+2i +1)/2$ with $k=h$, $i=a\geq 1$, and $(-1)^k=(-1)^h$; 
or $a < 0$, and then $n=k(k+2i-1)/2$, $k=h+2a+1\geq 1$, $i = -a > 0$, and $(-1)^{k-1}=(-1)^h$.

This proves the lemma in view of the observation made just before.
\end{proof}

We have the following new formulas for~$C_n(q)$.

\begin{theorem}\label{th-Cn}
(i) For each integer $n\geq 1$ we have 
\[
\frac{C_n(q)}{q^n} = \sum \, (-1)^h \, \left( q^a + q^{-a} \right),
\] 
where the summation is taken over all increasing sequences $\IS(a,h)$ such that $\IS(a,h) \models n$. 
Moreover, for given $|a|$, there is a unique increasing sequence $\IS(a,h)$ with $\IS(a,h) \models n$. 

(ii) We also have
\[
\frac{C_n(q)}{q^n} = \sum_{d | n , \, d \, \mathrm{odd}} \, 
\left( q^{\frac{n}{d} - \frac{d-1}{2}} +  q^{-\frac{n}{d} + \frac{d-1}{2}}  
- q^{\frac{n}{d} - \frac{d+1}{2}}  - q^{-\frac{n}{d} + \frac{d+1}{2}} \right) .
\] 
Moreover, all monomials $\neq q^0$ in the previous sum are distinct.
\end{theorem}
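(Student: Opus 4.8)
The plan is to derive part~(i) directly from Lemma~\ref{formula1}, then obtain part~(ii) by applying the involution on increasing sequences described in \S\ref{ssec-IS}. For part~(i), observe that Lemma~\ref{formula1} already expresses $C_n(q)/q^n$ as a sum of $(-1)^h(q^a + q^{-a})$ over pairs $(h,a)$ with $h \geq 1$, $a \in \ZZ$, and $n = h(h+1+2a)/2$. The content of part~(i) is the translation of this indexing set into the language of increasing sequences: a pair $(a,h)$ with $h \geq 1$ determines the increasing sequence $\IS(a,h) = \{a+1, \ldots, a+h\}$, whose sum of elements is $\sum_{j=1}^h (a+j) = ha + h(h+1)/2 = h(2a+h+1)/2$. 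Hence $\IS(a,h) \models n$ is \emph{precisely} the condition $n = h(h+1+2a)/2$ appearing in Lemma~\ref{formula1}. The uniqueness of $(a,h)$ given $|a|$ is exactly the last sentence of Lemma~\ref{formula1}, so part~(i) requires essentially no new work beyond matching notation.

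For part~(ii), I would reorganize the sum in part~(i) by pairing each increasing sequence $\IS(a,h)$ with its involute $\IS(\check a, \check h)$, using the relations $a + \check a + 1 = 0$ and $\check a + \check h = a + h$ from~\eqref{def-invIS}. Since $(-1)^h = -(-1)^{\check h}$, the two sequences in an involution pair contribute terms of opposite overall sign. Recall from the final paragraphs of \S\ref{ssec-IS} that the positive increasing sequences representing~$n$ are in bijection with the odd divisors~$d$ of~$n$ via~\eqref{eq-ah}: namely $a = n/d - (d+1)/2$, $h = d$, with $\check a = -n/d + (d-1)/2$, $\check h = 2n/d$ from~\eqref{eq-ah2}; moreover every increasing sequence representing~$n$ is either positive or negative, and the involution swaps these two classes. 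Therefore the sum over \emph{all} increasing sequences representing~$n$ equals the sum over positive ones plus the sum over negative ones, which by the involution is the sum over positive ones of the two terms coming from $\IS(a,h)$ and from $\IS(\check a, \check h)$. Indexing the positive ones by odd divisors~$d$, the sequence $\IS(a,h)$ contributes $(-1)^d(q^a + q^{-a})$ with $a = n/d - (d+1)/2$, and $\IS(\check a, \check h)$ contributes $(-1)^{\check h}(q^{\check a} + q^{-\check a}) = -(-1)^d(q^{\check a} + q^{-\check a})$ with $\check a = -n/d + (d-1)/2$. Writing $a' = n/d - (d-1)/2 = -\check a$, this pair of contributions is $(-1)^d\bigl(q^{a'} + q^{-a'} - q^{a} - q^{-a}\bigr) = (-1)^d\bigl(q^{n/d - (d-1)/2} + q^{-n/d + (d-1)/2} - q^{n/d - (d+1)/2} - q^{-n/d + (d+1)/2}\bigr)$. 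Here I would use that $d$ is \emph{odd}, so $(-1)^d = -1$; but a sign check is needed — let me instead note that $h = d$ is odd gives $(-1)^h = -1$, so the sign in front of the $\IS(a,h)$-term is $-1$, and the $\IS(\check a,\check h)$-term (with $\check h$ even) carries $+1$; hence the net contribution of the pair is $q^{\check a}+q^{-\check a} - q^a - q^{-a}$, which is exactly the summand displayed in~(ii) since $\check a = -n/d + (d-1)/2$ and $a = n/d - (d+1)/2$.

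The main obstacle I anticipate is the claim that \emph{all monomials $\neq q^0$ in the sum of part~(ii) are distinct}. This is where the uniqueness clause of part~(i) (equivalently of Lemma~\ref{formula1}) does the real work: each nonzero exponent~$a$ appearing on the right-hand side of~(i) comes from a \emph{unique} increasing sequence $\IS(a,h) \models n$, so the map from (increasing sequences representing $n$) to (their "$a$-value") is injective up to sign. Translating through the bijection with odd divisors, the four exponents $\pm(n/d - (d-1)/2)$ and $\pm(n/d - (d+1)/2)$ attached to a given odd divisor~$d$ are pairwise distinct from those attached to any other odd divisor~$d'$ (when nonzero), because two distinct odd divisors yield two distinct positive increasing sequences, hence distinct $a$-values; one must also check that within a single~$d$ the exponent $n/d - (d-1)/2$ differs from $n/d-(d+1)/2$, which is immediate as they differ by~$1$. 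The only coincidences to worry about are exponents equal to~$0$, which are exactly what the statement excludes; I would verify that an exponent vanishes precisely when the corresponding increasing sequence is "self-involute" in the relevant sense, i.e.\ when $n/d = (d\pm1)/2$, recovering the triangular-number condition of~\eqref{formula-cn0}. So the distinctness assertion reduces cleanly to the uniqueness already established, and no further combinatorial input is required.
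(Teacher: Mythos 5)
Your proposal is correct and follows essentially the same route as the paper: part~(i) is obtained from Lemma~\ref{formula1} by computing the sum of the elements of $\IS(a,h)$ as $h(h+2a+1)/2$, and part~(ii) by grouping the increasing sequences representing~$n$ into involution orbits indexed by the odd divisors~$d$, with the sign $(-1)^h$ giving $-1$ on the odd member ($h=d$) and $+1$ on the even member ($\check h = 2n/d$). Your more explicit justification of the distinctness of the monomials $\neq q^0$ via the uniqueness clause of Lemma~\ref{formula1} is exactly what the paper leaves implicit, and it is sound.
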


\begin{proof} 
(i) The first formula follows from Lemma~\ref{formula1} since the sum of the elements in $\IS(a,h)$ is 
equal to 
\[
a+1+a+2+ \cdots+a+h = ha+h(h+1)/2 = h(h+2a+1)/2.
\]

(ii) As in \S~\ref{ssec-IS}, with each odd divisor~$d$ of~$n$ we associate the two increasing 
sequences $\IS(a,h)$ and $\IS(\check a,\check h)$, respectively odd and even, defined by \eqref{def-ISd} and \eqref{def-ISdv}. 
All increasing sequences representing~$n$ are obtained in this way. 
Item~(ii) then follows from Item~(i): the first two terms correspond to~$\IS(\check{a}, \check{h})$ and the last two to~$\IS(a,h)$.
\end{proof}

The previous result immediately implies the following.

\begin{coro}\label{coro-vacni} 
The sum of the absolute values of the coefficients of $C_n(q)$ is equal to four times the number of odd divisors of~$n$.
\end{coro}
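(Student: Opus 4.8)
The plan is to exploit part~(ii) of Theorem~\ref{th-Cn}. Since multiplying a Laurent polynomial by a power of~$q$ only permutes its monomials and leaves the coefficients unchanged, the sum~$S(n)$ of the absolute values of the coefficients of~$C_n(q)$ equals the corresponding sum for $C_n(q)/q^n$. Write $\delta(n)$ for the number of odd divisors of~$n$. By Theorem~\ref{th-Cn}(ii), $C_n(q)/q^n$ is a sum of $4\,\delta(n)$ signed monomials $\pm q^{e}$: each odd divisor~$d$ of~$n$ contributes $+q^{e_1}$, $+q^{e_2}$, $-q^{e_3}$, $-q^{e_4}$ with $e_1 = n/d - (d-1)/2 = -e_2$, with $e_3 = n/d - (d+1)/2 = -e_4$, and with $e_1 = e_3 + 1$. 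Collecting these contributions degree by degree and applying the triangle inequality, one gets $S(n) \leq 4\,\delta(n)$, with equality if and only if there is no cancellation, i.e. if and only if any two of the signed monomials above that carry the same power of~$q$ carry it with the same sign.

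By the last sentence of Theorem~\ref{th-Cn}(ii), all the signed monomials with exponent $\neq 0$ are pairwise distinct, so cancellation can occur only at~$q^0$. The next step is therefore to pin down which signed monomials equal~$q^0$. For a fixed odd divisor~$d$ one checks directly that $e_1 = 0$ (equivalently $e_2 = 0$) if and only if $2n = d(d-1)$, and that $e_3 = 0$ (equivalently $e_4 = 0$) if and only if $2n = d(d+1)$; because $e_1 = e_3 + 1$ these two possibilities are mutually exclusive, and in either case a monomial~$q^0$ is produced only when $n$ is a triangular number $r(r+1)/2$ with $d \in \{r, r+1\}$.

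Finally, I would argue that at most one odd divisor contributes a monomial~$q^0$, and that when it does, its two contributions have the same sign. If $n = r(r+1)/2$, exactly one of~$r$, $r+1$ is odd; call it~$d_0$, and note $d_0 \mid n$ (indeed $n/r = (r+1)/2 \in \ZZ$ if $r$ is odd, and $n/(r+1) = r/2 \in \ZZ$ if $r$ is even). As the maps $d \mapsto d(d-1)$ and $d \mapsto d(d+1)$ are strictly increasing for $d \geq 1$, the integers $d$ with $2n \in \{d(d-1), d(d+1)\}$ form a pair of consecutive integers, exactly one of which is odd; hence $d_0$ is the only odd divisor producing~$q^0$. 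For~$d_0$ one is in exactly one of the two cases above: either $e_1 = e_2 = 0$ (so the only coincidence at~$q^0$ is $+q^0$ with $+q^0$, while $e_3 = -1$ and $e_4 = 1$ are distinct and nonzero), or $e_3 = e_4 = 0$ (so the coincidence is $-q^0$ with $-q^0$, while $e_1 = 1$ and $e_2 = -1$ are distinct and nonzero). In both cases the colliding monomials reinforce one another, so no cancellation ever occurs and $S(n) = 4\,\delta(n)$, as claimed. The one point requiring care is precisely this bookkeeping at~$q^0$: one must verify both that a single odd divisor is responsible for it and that its two contributions add rather than cancel; everything else is immediate from Theorem~\ref{th-Cn}(ii).
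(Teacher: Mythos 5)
Your proof is correct and follows the same route the paper intends when it declares the corollary an immediate consequence of Theorem~\ref{th-Cn}: four monomials with coefficient $\pm 1$ per odd divisor, with no cancellation possible. Your explicit bookkeeping at $q^0$ is sound, and it is precisely what the uniqueness clause of Lemma~\ref{formula1} and Theorem~\ref{th-Cn}\,(i) (``for given $|a|$ there is a unique increasing sequence'') already guarantees, since the $q^0$ contribution comes from the single representation with $a=0$ and therefore carries coefficient $2(-1)^h$ of absolute value~$2$.
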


\subsection{Zeta functions}\label{ssec-zeta}

We state further consequences of Theorem~\ref{th-Cn}. 
Recall the \emph{local zeta function}~$Z_{\HH^n/\FF_q}(t)$ 
of the Hilbert scheme~$\HH^n$ of $n$~points on the two-dimensional torus
over~$\FF_q$ mentioned in the introduction. 
In \cite[Th.~2.1]{KR3}, we gave the following formula for it: 
\[
Z_{\HH^n/\FF_q}(t) = \frac{1}{(1-q^nt)^{c_{n,0}}} \, \prod_{i=1}^n \, \frac{1}{[(1-q^{n+i}t)(1-q^{n-i}t)]^{c_{n,i}}} \, ,
\]
where $c_{n,i}$ are the coefficients of~$C_n(q)$.
Rewriting this formula in view of Theorem~\ref{th-Cn} yields the following statement, where $r_n(d) = n/d - (d+1)/2$
is the integer introduced in~\eqref{eq-rnd}.

\begin{prop}\label{prop-zeta}
For $n\geq 1$ we have
\begin{equation*}
Z_{\HH^n/\FF_q}(t) 
= \prod_{d | n , \, d \, \mathrm{odd}} \, 
\frac{(1-q^{n + r_n(d)}t)(1-q^{n - r_n(d)}t)}{(1-q^{n + r_n(d)+1}t)(1-q^{n - r_n(d)-1}t)} .
\end{equation*}
\end{prop}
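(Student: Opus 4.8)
The plan is to obtain Proposition~\ref{prop-zeta} by a direct substitution of the formula of Theorem~\ref{th-Cn}(ii) into the product formula for $Z_{\HH^n/\FF_q}(t)$ recalled from \cite[Th.~2.1]{KR3}. The key observation is that the existing zeta-function formula is an infinite-looking product whose factors are indexed by the coefficients $c_{n,i}$ of $C_n(q)$, so rewriting it amounts to re-indexing those factors according to the decomposition of $C_n(q)$ into four monomials per odd divisor.

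First I would recall that Theorem~\ref{th-Cn}(ii) gives
\[
\frac{C_n(q)}{q^n} = \sum_{d \mid n,\, d\,\mathrm{odd}} \left( q^{r_n(d)} + q^{-r_n(d)} - q^{r_n(d)-1} - q^{-r_n(d)+1}\right),
\]
using $n/d - (d-1)/2 = r_n(d)+1$ and $n/d - (d+1)/2 = r_n(d)$, so that the four exponents attached to $d$ are $\pm r_n(d)$ and $\pm(r_n(d)+1)$ (after the shift by $q^{-1}$ in the outer exponent this matches $n/d - (d\mp1)/2$). Equivalently $C_n(q) = \sum_d \bigl(q^{n+r_n(d)} + q^{n-r_n(d)} - q^{n+r_n(d)+1} - q^{n-r_n(d)-1}\bigr)$. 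Since, by the ``all monomials $\neq q^0$ are distinct'' clause of Theorem~\ref{th-Cn}(ii), each coefficient $c_{n,i}$ (for $i\geq1$) equals $+1$ exactly when $i = |r_n(d)|$ or $i=|{-r_n(d)}|$ for some odd $d$ with the sign pattern giving $+q^{n\pm i}$, and $-1$ in the complementary cases, each factor $(1-q^{n+i}t)^{-c_{n,i}}(1-q^{n-i}t)^{-c_{n,i}}$ in the product is contributed by exactly one odd divisor $d$. Collecting the four factors associated with a fixed $d$ — the two with exponent $+1$ going into the numerator, the two with coefficient contribution $-1$ going into the denominator — produces precisely
\[
\frac{(1-q^{n+r_n(d)}t)(1-q^{n-r_n(d)}t)}{(1-q^{n+r_n(d)+1}t)(1-q^{n-r_n(d)-1}t)}.
\]
Taking the product over all odd $d$ and absorbing the $c_{n,0}$ factor (which corresponds to $d=1$, $r_n(1)=n-1$: there the numerator term $1-q^{2n-1}t$ from the exponent-$(r_n(1)+1)=n$ monomial sits in the denominator slot and the $q^n$ coefficient enters as claimed) yields the stated formula. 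I would verify that the $d=1$ bookkeeping is consistent with the value $c_{n,0} = 2(-1)^r$ when $n=r(r+1)/2$ and $0$ otherwise, which matches the fact that $r_n(1) = n-1$ makes $q^{n+r_n(d)}t = q^{2n-1}t$ collide with nothing while the ``central'' monomials $q^{\pm0}$ only appear when $n$ is triangular.

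The main obstacle is purely notational: one must be careful that the bijection between odd divisors $d$ and the four-term blocks is compatible with possible coincidences among exponents across different divisors. But Theorem~\ref{th-Cn}(ii) already asserts that all monomials other than $q^0$ are distinct, so no two blocks overlap except possibly at the constant term, and the constant-term contributions are exactly accounted for by $c_{n,0}$; hence the re-indexing is legitimate and the substitution goes through without any genuine analytic or combinatorial difficulty. I would therefore present the proof as: ``Substitute the expression of Theorem~\ref{th-Cn}(ii) for the coefficients $c_{n,i}$ into the formula of \cite[Th.~2.1]{KR3} and regroup the factors according to the odd divisor $d$ responsible for each monomial; the distinctness statement in Theorem~\ref{th-Cn}(ii) guarantees that this regrouping is well defined, and \eqref{eq-rnd} identifies the resulting exponents.''
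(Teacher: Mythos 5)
Your overall strategy --- substitute the odd-divisor expression of Theorem~\ref{th-Cn}\,(ii) for $C_n(q)$ into the product formula of \cite[Th.~2.1]{KR3} and regroup the four factors attached to each odd divisor, the distinctness of the monomials guaranteeing that the regrouping is well defined --- is exactly the paper's (whose entire proof is the sentence ``rewriting this formula in view of Theorem~\ref{th-Cn} yields the following statement''), and your final per-divisor block is the correct one.

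However, your intermediate bookkeeping contains a sign error which, followed literally, inverts the answer. Since $r_n(d)=n/d-(d+1)/2$, Theorem~\ref{th-Cn}\,(ii) reads $C_n(q)/q^n=\sum_d\bigl(q^{r_n(d)+1}+q^{-r_n(d)-1}-q^{r_n(d)}-q^{-r_n(d)}\bigr)$: the \emph{positively} signed monomials are $q^{\pm(r_n(d)+1)}$ and the \emph{negatively} signed ones are $q^{\pm r_n(d)}$. Your first display shifts all four exponents down by one, and your identity $C_n(q)=\sum_d\bigl(q^{n+r_n(d)}+q^{n-r_n(d)}-q^{n+r_n(d)+1}-q^{n-r_n(d)-1}\bigr)$ is the \emph{negative} of the correct one. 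Because a monomial $q^{n\pm i}$ occurring with coefficient $+1$ contributes $(1-q^{n\pm i}t)^{-1}$, i.e.\ a denominator factor, while coefficient $-1$ gives a numerator factor, your signed decomposition would put $(1-q^{n\pm r_n(d)}t)$ in the denominator and yield the reciprocal of the stated product; with the signs corrected the blocks come out exactly as claimed. A smaller point: your aside attributing $c_{n,0}$ to the divisor $d=1$ is off. The $d=1$ block contributes the exponents $\pm n$ and $\pm(n-1)$, none of which is $0$ once $n\ge 2$; the constant term comes instead from the odd divisor $d$ with $r_n(d)=0$ or $r_n(d)=-1$, i.e.\ precisely when $n$ is triangular, which is what matches $c_{n,0}=2(-1)^r$ for $n=r(r+1)/2$.
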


This is a rational function whose numerator and denominator are factorized into degree-one polynomials.
Clearly the number of such factors in the numerator as well as in the denominator is equal to twice
the number of odd divisors of~$n$.

Let us display the product above in two examples. 
For $n=4$, which has only~$1$ as an odd divisor, we have $r_4(1) = 3$. Hence
\begin{equation*}
Z_{\HH^4/\FF_q}(t)  = \frac{(1-q^{7}t)(1-qt)}{(1-q^{8}t)(1-t)} .
\end{equation*}
For $n=3$, which has two odd divisors, we have $r_3(1) = 2$ and $r_3(3) = -1$. Hence
\begin{equation*}
Z_{\HH^3/\FF_q}(t)  = \frac{(1-q^{5}t)(1-qt)}{(1-q^{6}t)(1-t)} \cdot \frac{(1-q^{2}t)(1-q^4t)}{(1-q^{3}t)^2} .
\end{equation*}

Proposition~\ref{prop-zeta} allows us to give a similar formula for the \emph{Hasse--Weil zeta function} 
$\zeta_{\HH^n}(s)$ of the above Hilbert scheme; this zeta function is defined over the complex numbers by
\begin{equation*}\label{def-HWZ}
\zeta_{\HH^n}(s) = \prod_{p \; \text{prime}} \, Z_{{\HH^n_{\FF_p}}/\FF_p}(p^{-s}), \qquad (s \in \CC)
\end{equation*}
where the product is taken over all prime integers~$p$. 
As an immediate consequence of the formula for the local zeta functions~$Z_{\HH^n/\FF_q}(t)$ we have
\begin{equation}
\zeta_{\HH^n}(s) = \prod_{d | n , \, d \, \mathrm{odd}} \, 
\frac{ \zeta(s - n - r_n(d)) \, \zeta(s - n + r_n(d) ) }{ \zeta(s - n - r_n(d) -1) \, \zeta(s - n + r_n(d) +1) } \, , 
\end{equation}
where $\zeta(s)$ is Riemann's zeta function.
Recall from \cite[Sect.~2]{KR3} that the Hasse--Weil zeta function satisfies the simple functional equation
\begin{equation}
\zeta_{\HH^n}(s) = \zeta_{\HH^n}(2n-s).
\end{equation}

\section{Proof of Theorem~\ref{th-PFodd}}\label{pf-th-PFodd}

The following lemma exhibits a crucial link between the polynomials~$F_r(X)$ and
the expression in parentheses in the formula of Theorem~\ref{th-Cn}\,(ii).

\begin{lemma}\label{lem-CF}
For $r \geq 0$ we have
\[
q^{r+1} + q^{-r-1} - q^{r} - q^{-r}  = q^{-1}(q-1)^2 F_r(q + q^{-1}) .
\]
\end{lemma}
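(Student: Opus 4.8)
The plan is to reduce the identity to a single finite geometric sum. With the substitution $X = q + q^{-1}$ the factor on the right-hand side simplifies, since $q^{-1}(q-1)^2 = q - 2 + q^{-1} = X - 2$; equivalently $q^{-1}(q-1)^2 = (q-1)(1-q^{-1})$. First I would invoke the description of $F_r$ as a sum of monic Chebyshev polynomials, namely \eqref{def-F2} together with the defining identity \eqref{defTT} for $\TT_k$, to write
\[
F_r(q + q^{-1}) = 1 + \sum_{k=1}^{r} \TT_k(q + q^{-1}) = 1 + \sum_{k=1}^{r} \left( q^k + q^{-k} \right) = \sum_{k=-r}^{r} q^k .
\]

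Next I would evaluate this last sum as a geometric series,
\[
\sum_{k=-r}^{r} q^k = q^{-r}\,\frac{q^{2r+1} - 1}{q - 1} = \frac{q^{r+1} - q^{-r}}{q - 1} ,
\]
and multiply by $q^{-1}(q-1)^2 = (q-1)(1 - q^{-1})$, so that the factor $q-1$ cancels:
\[
q^{-1}(q-1)^2\, F_r(q + q^{-1}) = (1 - q^{-1})\left( q^{r+1} - q^{-r} \right) = q^{r+1} + q^{-r-1} - q^{r} - q^{-r} .
\]
This is precisely the left-hand side of the asserted identity, which finishes the argument.

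An alternative is induction on $r$: the base case $r = 0$ reads $q + q^{-1} - 2 = q^{-1}(q-1)^2$, and the inductive step uses the recursion $F_r = F_{r-1} + \TT_r$ from \eqref{def-F} together with $q^{-1}(q-1)^2(q^r + q^{-r}) = q^{r+1} + q^{1-r} - 2q^{r} - 2q^{-r} + q^{r-1} + q^{-r-1}$, whose middle terms telescope against the contribution of the previous step. Either route is routine; there is no genuine obstacle in the proof. The real point of the lemma is structural: it shows that the four-term bracket appearing in each summand of Theorem~\ref{th-Cn}(ii) is, up to the single universal factor $q^{-1}(q-1)^2 = (q-1)^2/q$, exactly a value of $F_r$ at $q + q^{-1}$ with $r = r_n(d)$ (or $r = -r_n(d)-1$ when $r_n(d) < 0$). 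This is the identity that will let us divide the formula for $C_n(q)$ by $(q-1)^2$ and pass to the formula for $\Pg_n(X)$ in the proof of Theorem~\ref{th-PFodd}.
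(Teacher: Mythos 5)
Your proof is correct, but it takes a genuinely different route from the paper's. The paper rewrites the claimed identity in the variable $X = q + q^{-1}$ as $\TT_{r+1}(X) - \TT_r(X) = (X-2)\,F_r(X)$ and then, using $\TT_k(X) = F_k(X) - F_{k-1}(X)$ from \eqref{def-F}, observes that this is nothing but the three-term recurrence $F_{r+1}(X) = X F_r(X) - F_{r-1}(X)$ of Proposition~\ref{prop-recF}, which is already established; no manipulation of powers of $q$ is needed. You instead evaluate $F_r$ at $q + q^{-1}$ explicitly: by \eqref{def-F2} and \eqref{defTT} you obtain the geometric sum $F_r(q+q^{-1}) = \sum_{k=-r}^{r} q^k = (q^{r+1}-q^{-r})/(q-1)$, and multiplying by $q^{-1}(q-1)^2 = (q-1)(1-q^{-1})$ yields the four-term left-hand side directly; all steps check out. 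The paper's version is shorter given that Proposition~\ref{prop-recF} is in hand, while yours has the side benefit of isolating the closed form $F_r(q+q^{-1}) = \sum_{k=-r}^{r} q^k$, which is precisely the identity underlying Proposition~\ref{formula2}, where $P_n(q)/q^{n-1}$ is expressed as a signed sum of powers of $q$ over symmetric integer intervals. Your sketched inductive alternative is also sound and is essentially the paper's recurrence argument in disguise.
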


\begin{proof}
The formula can be rephrased in terms of polynomials in the indeterminate~$X$
as $\TT_{r+1}(X) - \TT_r(X) = (X-2) F_r(X)$. 
By \eqref{def-F} the latter is equivalent to $(F_{r+1}(X) - F_r(X)) - (F_r(X) - F_{r-1}(X)) = X F_r(X) - 2 F_r(X)$,
hence to the linear recurrence relation of Proposition~\ref{prop-recF}.
\end{proof}

\begin{proof}[Proof of Theorem~\ref{th-PFodd}]
It follows from Theorem~\ref{th-Cn}~(ii) that
\begin{eqnarray*}
q^{-1}(q-1)^2 \frac{P_n(q)}{q^{n-1}}  
& = & \frac{C_n(q)}{q^n} \\
& = & \sum_{d | n , \, d\, \mathrm{odd}} \, 
\left( q^{\frac{n}{d} - \frac{d-1}{2}} +  q^{-\frac{n}{d} + \frac{d-1}{2}}  
- q^{\frac{n}{d} - \frac{d+1}{2}}  - q^{-\frac{n}{d} + \frac{d+1}{2}} \right) .
\end{eqnarray*}
We partition the previous sum into two sums depending on the sign of $r_n(d) = n/d - (d+1)/2$.
We obtain 
\begin{eqnarray*}
q^{-1}(q-1)^2 \frac{P_n(q)}{q^{n-1}}  
& = & \sum_{d | n, \, d \, \mathrm{odd} \atop r_n(d) \geq 0} \,
\left( q^{\frac{n}{d} - \frac{d-1}{2}} +  q^{-\frac{n}{d} + \frac{d-1}{2}}  
- q^{\frac{n}{d} - \frac{d+1}{2}}  - q^{-\frac{n}{d} + \frac{d+1}{2}} \right) \\
& & + \sum_{d | n, \, d \, \mathrm{odd} \atop r_n(d) < 0} \,
\left( q^{\frac{n}{d} - \frac{d-1}{2}} +  q^{-\frac{n}{d} + \frac{d-1}{2}}  
- q^{\frac{n}{d} - \frac{d+1}{2}}  - q^{-\frac{n}{d} + \frac{d+1}{2}} \right) .
\end{eqnarray*} 
Now replacing $r$ by $r_n(d)$ in Lemma~\ref{lem-CF}, we see that the sum of monomials corresponding to $r_n(d) \geq 0$
is equal to 
\[
q^{-1}(q-1)^2 \sum_{d | n, \, d \, \mathrm{odd} \atop r_n(d) \geq 0} \, F_{r_n(d)}(q + q^{-1}).
\]
Replacing $r$ by $-r_n(d) - 1$ in Lemma~\ref{lem-CF}, we observe that the sum of monomials corresponding to $r_n(d) < 0$, 
equivalently to $-r_n(d) - 1 \geq 0$, is equal to
\[
q^{-1}(q-1)^2 \sum_{d | n, \, d \, \mathrm{odd} \atop r_n(d) < 0} \, F_{-r_n(d) -1}(q + q^{-1}).
\]
To conclude, it suffices to divide by $q^{-1}(q-1)^2$ 
and to remind that $P_n(q)/q^{n-1} = \Pg_n(q + q^{-1})$.
\end{proof}

We close this section with another formula for $P_n(q)/q^{n-1}$.

\begin{prop}\label{formula2}
We have
\[
\frac{P_n(q)}{q^{n-1}} =
\sum_{d|n, \,d\,\mathrm{odd} \atop r_n(d) \geq 0} \; \sum_{i \in  \IS(\check{a},\check{h}) \setminus \IS(a,h)} \, q^i
- \sum_{d|n, \,d\,\mathrm{odd} \atop r_n(d) < 0}  \; \sum_{i \in  \IS(a,h)\setminus   \IS(\check{a},\check{h})}\, q^i  ,
\]
where $a = r_n(d)$ and $\check{a} = - r_n(d) - 1$, and
$\IS(a,h)$ and $\IS(\check{a},\check{h})$ are defined respectively by \eqref{def-ISd} and \eqref{def-ISdv}.
\end{prop}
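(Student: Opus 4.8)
The plan is to derive Proposition~\ref{formula2} from Theorem~\ref{th-Cn}(ii) by rewriting each four-term expression $q^{\frac nd - \frac{d-1}{2}} + q^{-\frac nd + \frac{d-1}{2}} - q^{\frac nd - \frac{d+1}{2}} - q^{-\frac nd + \frac{d+1}{2}}$ as an explicit finite geometric sum. First I would record the elementary identity
\[
q^{-1}(q-1)^2 \sum_{i=-r}^{r} q^i = q^{r+1} + q^{-r-1} - q^{r} - q^{-r} \qquad (r\geq 0),
\]
which follows immediately from $(q-1)(1+q+\cdots+q^{2r}) = q^{2r+1}-1$; in view of Lemma~\ref{lem-CF} it is the same as $F_r(q+q^{-1}) = \sum_{i=-r}^{r} q^i$, an identity that can also be read off directly from \eqref{def-F2} and \eqref{defTT}.

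Next I would translate the index sets appearing in the statement into symmetric intervals of integers. For an odd divisor $d$ of $n$ with $r_n(d)\geq 0$, the sequence $\IS(a,h)$ of \eqref{def-ISd} is positive, and \eqref{def-ISdv1} shows that $\IS(\check a,\check h)\setminus\IS(a,h) = \{-r_n(d), -r_n(d)+1, \ldots, r_n(d)\}$, since $\frac nd - \frac{d+1}{2} = r_n(d)$. Applying the displayed identity with $r=r_n(d)$ then gives
\[
q^{-1}(q-1)^2 \sum_{i\in\IS(\check a,\check h)\setminus\IS(a,h)} q^i = q^{\frac nd - \frac{d-1}{2}} + q^{-\frac nd + \frac{d-1}{2}} - q^{\frac nd - \frac{d+1}{2}} - q^{-\frac nd + \frac{d+1}{2}}.
\]
When $r_n(d)<0$, the sequence $\IS(a,h)$ is negative, and \eqref{def-ISdv2} gives $\IS(a,h)\setminus\IS(\check a,\check h) = \{r_n(d)+1, r_n(d)+2, \ldots, -r_n(d)-1\}$, a symmetric interval about $0$ since $\frac nd - \frac{d-1}{2} = r_n(d)+1$; applying the displayed identity with $r = -r_n(d)-1 \geq 0$ and tracking signs, the corresponding four-term expression equals $-\,q^{-1}(q-1)^2\sum_{i\in\IS(a,h)\setminus\IS(\check a,\check h)} q^i$, which accounts for the minus sign in the proposition.

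Finally I would assemble the pieces: Theorem~\ref{th-Cn}(ii) writes $q^{-1}(q-1)^2\,P_n(q)/q^{n-1} = C_n(q)/q^n$ as the sum over all odd divisors $d\mid n$ of the four-term expressions; splitting this sum according to the sign of $r_n(d)$, substituting the two identities just obtained, and cancelling the common factor $q^{-1}(q-1)^2$, one recovers exactly the claimed formula. I do not expect a genuine obstacle here, since the argument is essentially a more explicit version of the proof of Theorem~\ref{th-PFodd}; the only points needing care are the correct identification of the endpoints in \eqref{def-ISdv1}--\eqref{def-ISdv2} with $\pm r_n(d)$ and the sign bookkeeping in the case $r_n(d)<0$.
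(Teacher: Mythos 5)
Your proof is correct and follows essentially the same route as the paper: the paper deduces the proposition in one line from Theorem~\ref{th-PFodd} together with the descriptions \eqref{def-ISdv1}--\eqref{def-ISdv2}, the underlying point being exactly your identity $F_r(q+q^{-1})=\sum_{i=-r}^{r}q^i$ and the identification of the set differences with the symmetric intervals $\{-r_n(d),\ldots,r_n(d)\}$ and $\{r_n(d)+1,\ldots,-r_n(d)-1\}$. Starting from Theorem~\ref{th-Cn}(ii) instead of Theorem~\ref{th-PFodd} merely unrolls the proof of the latter, so the content is the same; your endpoint computations and sign bookkeeping all check out.
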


\begin{proof}
It follows from Theorem~\ref{th-PFodd} and the explicit descriptions~\eqref{def-ISdv1}--\eqref{def-ISdv2}.
\end{proof}

\begin{rem}
Sums of powers of $q$ indexed by odd divisors have also appeared in work of J. M. R. Caballero,
especially in Chapter~II of~\cite{RC1}.
\end{rem}

\section*{Acknowledgement}
We are grateful to Jos\'e Bastidas Olaya for his help with SageMath
and to V\'aclav Kot\v{e}\v{s}ovec for having corrected the value~$\Pg_7(4)$ in Table~\ref{table-values}.
We thank Christophe Pouzat and Marcus Slupinski for contributing to the present title.


\end{document}